\newtheorem{thm}{Theorem}[section]
\newtheorem{lem}[thm]{Lemma}
\newtheorem{prop}[thm]{Proposition}
\newcommand{\f}{{\mathbf{f}}}
\newcommand{\g}{{\mathbf{g}}}
\renewcommand{\a}{{\mathbf{a}}}
\renewcommand{\b}{{\mathbf{b}}}
\newcommand{\Fr}{{\mathrm{Fr}}}
\renewcommand{\sl}{{\mathrm{sl}\,}}
\newcommand{\Disc}{{\mathrm{Disc}}}
\newcommand{\BDisc}{{\mathrm{BDisc}}}
\newcommand{\lf}{{\mathrm{lf}\,}}
\newcommand{\fqb}{{\overline{\F}_q}}
\newcommand{\fpb}{{\overline{\F}_p}}
\newcommand{\G}{P}
\newcommand{\FF}{\mathcal{F}}
\newcommand{\Gal}{\mathrm{Gal}}
\renewcommand{\k}{k}
\newcommand{\K}{{\mathcal{K}}}
\newcommand{\Kb}{{\overline{\K}}}
\newcommand{\D}{{\mathcal{D}}}
\begin{document}

\title{On the Bateman-Horn Conjecture for Polynomials over Large Finite Fields}
\author{Alexei Entin}
\email{aentin@stanford.edu}
\address{450 Serra Mall, Stanford, CA 94305, U.S.A.}
\classification{11T55}
\keywords{Function Fields, Irreducible Polynomials, Bateman-Horn}
\thanks{The research leading to these results has received funding from the European Research Council under the European Union's Seventh Framework Programme (FP7/2007-2013) / ERC grant no. 320755.}

\begin{abstract} We prove an analogue of the classical Bateman-Horn conjecture on prime values of polynomials for the ring of polynomials over a large finite field.
Namely, given non-associate, irreducible, separable and monic (in the variable $x$) polynomials $F_1,\ldots,F_m\in\F_q[t][x]$, we show that the number of $f\in\F_q[t]$ of degree $n\ge\max(3,\deg_t F_1,\ldots,\deg_t F_m)$ such that all $F_i(t,f)\in\F_q[t],1\le i\le m$ are irreducible is $$\lb\prod_{i=1}^m\frac{\mu_i}{N_i}\rb q^{n+1}\lb 1+O_{m,\,\max\deg F_i,\,n}\lb q^{-1/2}\rb\rb,$$ where
$N_i=n\deg_xF_i$ is the generic degree of $F_i(t,f)$ for $\deg f=n$ and $\mu_i$ is the number of factors into which $F_i$ splits over $\fqb$.
Our proof relies on the classification of finite simple groups.

We will also prove the same result for non-associate, irreducible and separable (over $\F_q(t)$) polynomials $F_1,\ldots,F_m$ not necessarily monic in $x$ under the assumptions that $n$ is greater than the number of geometric points
of multiplicity greater than two on the (possibly reducible) affine plane curve $C$ defined by the equation $$\prod_{i=1}^mF_i(t,x)=0$$ (this number is always bounded above by $\lb\textstyle\sum_{i=1}^m\deg F_i\rb^2/2$, where $\deg$ denotes the total degree in $t,x$) and 
$$p=\mathrm{char}\,\F_q>\max_{1\le i\le m} N_i,$$ where $N_i$ is the generic degree
of $F_i(t,f)$ for $\deg f=n$.
\end{abstract}

\maketitle

\section{Introduction}

The classical Bateman-Horn conjecture \cite{bh} predicts the frequency at which a set of irreducible polynomials over the integers attains simultaneously prime values at integer points. Namely, let $F_1,\ldots,F_m\in\Z[x],\deg F_i>0$ be non-associate (i.e. no two differ just by a sign) irreducible polynomials over the integers and suppose that for each prime $p$
there exists $a\in\Z$ such that $p\nmid F_1(a)\cdots F_k(a)$. Then
$$\#\{1\le a<x | F_1(a),\ldots,F_k(a)\mbox{ are prime}\}\sim \frac{C(F_1,\ldots,F_m)}{\prod_{i=1}^m\deg F_i}\frac{x}{\log^m x},$$
where $$C(F_1,\ldots,F_m)=\prod_{p\,\mathrm{prime}}\frac{1-\nu(p)/p}{(1-1/p)^m},$$ $\nu(p)$ being the number of solutions to $F_1(x)\cdots F_m(x)\equiv 0
\pmod{p}$ in $\Z/p$. The only proved case of the conjecture is the case of a single linear polynomial, which is the Prime Number Theorem for arithmetic progressions.

In the present paper we establish an analogue of this conjecture for polynomials over large finite fields. Let $q$ be a power of a prime $p$, $\F_q$ the field
with $q$ elements. We will consider an analogue of the Bateman-Horn problem with the ring $\Z$ replaced by the one-variable polynomial ring $\F_q[t]$.
For polynomials in several variables we will denote by $\deg$ the total degree and by $\deg_t,\deg_x$, etc. the degree in the respective variable. 
Let $F_1,\ldots,F_m\in\F_q[t][x],\deg_x F_i>0$ be non-associate (i.e. not differing by a multiplicative constant in $\F_q^\times$), irreducible and separable over $\F_q(t)$, i.e. $F_i\not\in\F_q[t][x^p]$.
Let $n$ be a natural number. We ask for how many of the polynomials $f\in\F_q[t]$ with $\deg f=n$ all the values $F_i(t,f)\in\F_q[t]$ for $i=1,\ldots,m$ are irreducible. We are interested in the asymptotics of this quantity for fixed $m,\deg F_i,n$ and $q\to\ity$.
We will attack this problem by two different methods, each applicable under different additional conditions on $F_1,\ldots,F_m,n$ and 
$p=\mathrm{char}\,\F_q$, and obtain
two sets of results. The first method requires the classification of finite simple groups for its strongest form, while the second method is more direct and does not use any non-elementary facts from group theory.

Our first set of results applies to $F_i$ which are all monic in $x$. The second set of results, which applies also to the non-monic case, will be given as Theorem \ref{thmnm} at the end of the present section. To state our results we define the \emph{slope} of a polynomial $$P(t,x)=\sum_{j=0}^rc_j(t)x^j,c_r=1$$ which is monic in $x$ to be
\beq\label{slope}\sl P=\max_{1\le j\le r}\frac{\deg c_{r-j}(t)}{j}\eeq 
(the degree of the zero polynomial is $-\ity$).
The slope has the property that $\sl PQ\le\max(\sl P,\sl Q)$ and $\sl PQ=\sl P$ if $\sl P=\sl Q$ (to see the latter observe that if $\sl P=\sl Q,\deg_x P=r_P,\deg_x Q=r_Q$, and $j_P,j_Q$ are the largest indices for which the maximum in \rf{slope} is attained for $P$ and $Q$ respectively, then the degree of the coefficient of $x^{r_P+r_Q-j_P-j_Q}$ in $PQ$ is $(j_P+j_Q)\sl P$).
Also we always have $\sl P\le\deg_t P$.

Our main result for the monic case is the following.

\begin{thm}\label{main} Let $F_1,\ldots,F_m\in\F_q[t][x],\deg_x F_i=r_i>0$ be non-associate irreducible polynomials which are separable over $\F_q(t)$ $($i.e. $F_i\not\in\F_q[t][x^p])$ and monic in $x$.
Let $n$ be a natural number satisfying $n\ge 3$ and $n\ge\sl F_i$ for $1\le i\le m$. Denote $N_i=r_in$.
Denote by $\mu_i$ the number of irreducible factors into which $F_i(t,x)$ splits over $\fqb$.
Then
\begin{multline}\label{bh}\#\{f\in\F_q[t],\deg f=n | F(t,f)\in\F_q[t]\mbox{ is irreducible}\}=\\=\lb\prod_{i=1}^m\frac{\mu_i}{N_i}\rb q^{n+1}\lb 1+O_{m,\deg F_i,n}\lb q^{-1/2}\rb\rb,\end{multline}
the implicit constant in the $O$-notation depending only on $m,\deg F_i$ for $1\le i\le m$ and $n$.\end{thm}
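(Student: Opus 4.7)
The plan is to prove the asymptotic via an effective function field Chebotarev density theorem applied to a universal family of specializations. I introduce indeterminates $\A = (A_0,\ldots,A_n)$ and form the generic polynomial $\Phi_i(\A, t) := F_i(t, A_0 + A_1 t + \cdots + A_n t^n)$, viewed as a polynomial in $t$ with coefficients in $\F_q[\A]$. The slope hypothesis $n \ge \sl F_i$ guarantees that $\Phi_i$ has $t$-degree exactly $N_i = n r_i$, with leading coefficient a nonzero monomial in $A_n$. Each $\a \in \F_q^n \times \F_q^\times$ specializes to a degree-$n$ polynomial $f_\a \in \F_q[t]$, and $\Phi_i(\a, t) = F_i(t, f_\a)$ is irreducible iff the Frobenius at $\a$ in the Galois group of the splitting field of $\Phi_i$ acts as a single $N_i$-cycle on the $N_i$ roots.

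The core task is the computation of the arithmetic and geometric Galois groups of the splitting field of $\Phi_i$ over $\F_q(\A)$. Write $F_i = \prod_{j=1}^{\mu_i} G_{i,j}$ as a product of $\fqb(t)$-irreducible factors, cyclically permuted by $\Gal(\fqb/\F_q)$; correspondingly $\Phi_i = \prod_j G_{i,j}(t, f_\A(t))$ over $\fqb(\A)[t]$, with each factor of $t$-degree $N_i/\mu_i$. I aim to prove that the geometric Galois group of each such factor is the full symmetric group $S_{N_i/\mu_i}$, that these Galois groups are linearly disjoint as $j$ varies (for fixed $i$) and as $i$ varies, and hence that the arithmetic Galois group of $\Phi_i$ over $\F_q(\A)$ is the wreath product $S_{N_i/\mu_i} \wr \Z/\mu_i\Z$, with Frobenius acting as the canonical generator of $\Z/\mu_i\Z$ permuting the geometric factors.

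The hard step is identifying the individual geometric Galois groups as full symmetric groups. The approach I propose is: (a) exhibit an $\F_q$-rational specialization $\A = \A_0$ at which $G_{i,j}(t, f_{\A_0}(t))$ has a single simple double root with all other roots simple, yielding a transposition in the inertia of the Galois cover, via an analysis of $\Disc_t G_{i,j}(t, f_\A(t))$ as a polynomial in $\A$ (existence relying on $n \ge 3$ and on the separability hypothesis $F_i \not\in \F_q[t][x^p]$); (b) verify primitivity of the monodromy by ruling out nontrivial block systems through a further discriminant-ramification specialization; (c) invoke the CFSG-based classification of primitive permutation groups containing a transposition (or, more generally, a short cycle) to conclude the Galois group is $S_{N_i/\mu_i}$. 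Linear disjointness of the various splitting fields follows via a Goursat-type subdirect-product analysis; the non-associativity of the $F_i$ prevents identifications between simple quotients, and CFSG is again invoked to exclude exotic coincidences among the $\mu_i m$ factor groups.

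Once the Galois group is identified, an effective function field Chebotarev density theorem (equivalently, Lang--Weil with Deligne's Riemann Hypothesis applied to the cycle-type strata of the Galois cover) yields
$$\#\{f \in \F_q[t], \deg f = n : F_i(t, f) \mbox{ irreducible for all } i\} = q^{n+1}\prod_{i=1}^m \frac{\mu_i}{N_i}\bigl(1 + O(q^{-1/2})\bigr),$$
where $\mu_i/N_i$ arises as the fraction of elements in the Frobenius coset $G_i^{\mathrm{geom}} \cdot \Fr \subset G_i^{\mathrm{arith}} = S_{N_i/\mu_i} \wr \Z/\mu_i\Z$ acting as a single $N_i$-cycle, a direct count in the wreath product. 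The implicit constant depends only on the degree of the universal Galois cover, which is bounded in terms of $m$, $n$, and $\max_i \deg F_i$.
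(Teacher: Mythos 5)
Your high-level framework agrees with the paper's Section~\ref{secequidist}: reduce to a Galois group computation via an effective function-field Chebotarev theorem, and identify the arithmetic Galois group as a product of wreath products $S_{N_i/\mu_i}\wr\Z/\mu_i$. But the strategy you propose in steps (a)--(c) for computing the geometric Galois group is, in essence, the paper's proof of its \emph{second}, non-monic theorem (Theorem~\ref{thmnm}, carried out in Sections~\ref{secnm}--\ref{seckeyprop}), and that proof requires two hypotheses that are \emph{not} available in Theorem~\ref{main}: $p>\max_i N_i$ and $n>\#\{P\in C(\fpb):m_P>2\}$. The condition $p>\max N_i$ is what guarantees that the cover ramifies at some finite place at all (via the vanishing of the tame fundamental group of the affine line, Proposition~\ref{ram}); without it you cannot assert the existence of a transposition-producing specialization, and under wild ramification an inertia group need not yield a transposition. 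The condition $n>\#\{P:m_P>2\}$, together with the delicate Proposition~\ref{keyprop}, is what excludes specializations at which $F_i(t,f)$ acquires a root of multiplicity three or more. Under the hypotheses of Theorem~\ref{main} --- only monicity in $x$, $n\ge 3$, $n\ge\sl F_i$, with no restriction on $p$ or on the singular locus of $C$ --- your step (a) has no proof, and this is exactly why the paper has two separate arguments.

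The paper's actual proof of Theorem~\ref{main} avoids ramification analysis entirely. It proves $(n+1)$-transitivity of each $G_i$ by an \emph{arithmetic} argument (Proposition~\ref{trans}): Lang--Weil applied to $C^{n+1}$ gives $\sum_{\deg f=n}\ell_{n+1}(F_i(t,f))=q^{n+1}(1+O(q^{-1/2}))$, the equidistribution Theorem~\ref{equidist} converts this into $\langle\ell_{n+1}(\sigma)\rangle_{\sigma\in G_i}=1$, and an elementary Burnside-type lemma (Lemma~\ref{lemtrans}) upgrades that equality to $(n+1)$-transitivity. Since $n\ge 3$ this gives $4$-transitivity. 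Sign-independence of the $m$ projections is then established by proving that the discriminants $\Disc_t F_i(t,\f)$ are multiplicatively independent modulo squares (Proposition~\ref{propsign} for odd $p$; Proposition~5.2 via Berlekamp discriminants for $p=2$); these proofs use monicity and the slope hypothesis in an essential way via a weighted-homogeneity reduction to constant coefficients. The CFSG input is then Theorem~\ref{thmgroup}: a $4$-transitive subgroup of $S_N$ not contained in $A_N$ is $S_N$. Your remark that ``non-associativity of the $F_i$ prevents identifications between simple quotients'' elides a genuine lemma: every quotient $S_{N_i}/A_{N_i}$ is $\Z/2$, so the issue is precisely whether the sign map $G\to(\Z/2)^m$ is onto, and that is \emph{not} an automatic Goursat consequence of non-associativity --- it is the content of Propositions~\ref{propsign} and 5.2.

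A further point: if your steps (a) and (b) did go through, step (c) would not need CFSG at all, since ``primitive plus a transposition implies $S_N$'' is Jordan's classical theorem. CFSG enters the paper only through Theorem~\ref{thmgroup}, a different and deeper group-theoretic input, which is what makes the monic Theorem~\ref{main} valid without any hypothesis on $p$ beyond separability.
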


The assumption $n\ge\sl F_i$ implies that $N_i=r_in$ is the generic degree of $F_i(t,f)$ for $\deg f=n$, i.e. if $a_0,\ldots,a_n$ are free variables
then $$\deg_t F_i\lb t,\sum_{j=0}^n a_jt^j\rb=N_i.$$

We note that the implied constant can be made explicit, but we do not concern ourselves with tracking it.
We conjecture that Theorem \ref{main}, as well as Theorems \ref{main_dec} and \ref{chowla} stated below, hold more generally without the monicity condition on the $F_i$, without any conditions on $n$ except $n\ge 3$ and for even $q$ as well.
The separability condition on the $F_i$ generally cannot be omitted, it is not difficult to construct inseparable polynomials violating the Bateman-Horn statistics or even not assuming any irreducible values on $\F_q[t]$. See \cite{CCG} and \cite{swan}.

The proof of Theorem \ref{main} requires the classification of finite simple groups. If we content ourselves with a result valid for 
$n\ge 3\deg_xF_i,1\le i\le m$ (as well as $n\ge\sl F_i$), then a much more elementary result from group theory is sufficient. See the discussion following
the statement of Theorem \ref{thmgroup}. The same applies to Theorem \ref{main_dec} below.

Several related, less general, results have been known previously. Bary-Soroker and Jarden \cite{jarden} established \rf{bh} for polynomials $F_1,\ldots,F_k$ which are characteristic-0-like and nodal (see \cite[\S 1]{jarden} for the precise definitions) in the case $n=1$. Bary-Soroker \cite{constant}
and Pollack \cite{pollack} treated the case of $F_i$ independent of $t$, i.e. $F_i\in\F_q[x]$. Bary-Soroker also treated the case 
$F_i=x+h_i,h_i\in\F_q[t],n\ge\deg h_i$ (analogue of the Hardy-Littlewood prime tuples conjecture) in \cite{hl}. The special case $m=2$ of the latter result was previously established by Bender and Pollack \cite{benderpollack}. Bank, Bary-Soroker and Rosenzweig \cite{arith} treated the case
of a single linear polynomial and $n\ge 3$ ($n\ge 2$ for odd $q$).

We follow the general strategy used in most of the cited work above, which reduces the Bateman-Horn conjecture to computing the Galois group of the set of polynomials $F_i(t,a_nt^n+\ldots+a_0)$ over the field $\F_q(a_0,\ldots,a_n)$, $a_0,\ldots,a_n$ being free variables, using a version of the Chebotarev density theorem (see the next section). The Galois group computation is the novel part of the present work. Unlike the previous results described above where the Galois group was
computed directly by algebraic means, we will use the arithmetic significance of the Galois group provided by the density theorem to prove a strong transitivity property, after which we will invoke results about multiply transitive groups. For the proof of Theorem \ref{thmnm} we will use a more direct algebraic method.

In the setting of Theorem \ref{main} we will not just compute the probability of all the $F_i(t,f)$ being irreducible but the probability of any possible decomposition. For simplicity we state here our result just for the case of absolutely irreducible polynomials, by which we mean polynomials irreducible over $\fqb$.

\begin{thm}\label{main_dec} Let $F_1,\ldots,F_m\in\F_q[t][x]$ and $n$ satisfy all the conditions of Theorem $\ref{main}$ and moreover assume that the $F_i$ are absolutely irreducible.
Denote $N_i=r_in$, where $r_i=\deg_x F_i$.

Fix partitions $$N_i=\sum_{k=1}^{M_i} s_{ik}, s_{ik}\ge 1$$ of each $N_i$ for $1\le i\le m$.
Then the number of $f\in\F_q[t],\deg f=n$ such that each $F_i(t,f)$ decomposes into $M_i$ irreducible factors of
degrees $s_{ik},1\le k\le M_i$ is 
$$\lb\prod_{i=1}^m\mathcal{P}\lb s_{i1},\ldots,s_{iM_i}\rb q^{n+1}\rb\lb 1+O_{k,\deg F_i,n}\lb q^{-1/2}\rb\rb,$$ 
where $\mathcal{P}\lb s_{i1},\ldots,s_{iM_i}\rb$ is the probability of
a random permutation in $S_{N_i}$ having the cycle structure $\lb s_{i1},\ldots,s_{iM_i}\rb$.\end{thm}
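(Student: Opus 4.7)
The plan is to follow the same overall strategy as in Theorem \ref{main}: apply the explicit function-field Chebotarev density theorem, which reduces Theorem \ref{main_dec} to a generic Galois group computation. Let $\a=(a_0,\ldots,a_n)$ be transcendental over $\F_q$, set $K=\F_q(\a)$ and $f_\a(t)=\sum_{j=0}^n a_j t^j$, and let $G$ be the Galois group over $K$ of the splitting field of $\prod_{i=1}^m F_i(t,f_\a(t))$. The action on the roots of each factor gives an embedding $G\hookrightarrow\prod_{i=1}^m S_{N_i}$. For absolutely irreducible $F_i$ (so $\mu_i=1$), the proof of Theorem \ref{main} already establishes that each coordinate projection $G\to S_{N_i}$ is surjective. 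Theorem \ref{main_dec} is then equivalent to the stronger assertion $G=\prod_{i=1}^m S_{N_i}$: granted this, Chebotarev counts $f\in\F_q[t]$ of degree $n$ whose Frobenius conjugacy class has cycle type $\sigma_i$ in the $i$-th factor as $\prod_i(|C_{\sigma_i}|/|S_{N_i}|)\cdot q^{n+1}(1+O(q^{-1/2}))$, yielding the claimed formula.

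To identify $G$ with the full product I would argue inductively in $m$ using Goursat's lemma: the obstruction to $G=\prod_i S_{N_i}$ is a nontrivial common quotient shared by some pair $S_{N_i},S_{N_j}$. Since $n\ge 3$ and $r_i\ge 1$ we have $N_i\ge 3$; the proper nontrivial normal subgroups of $S_N$ for $N\ge 3$ are $A_N$ (always) and, only for $N=4$, the Klein four-group $V_4$. Hence the possible nontrivial common quotients are $\Z/2\Z$ (the sign character) and, only when some $N_i\in\{3,4\}$, $S_3$.

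The main obstacle is ruling out these common quotients for distinct non-associate absolutely irreducible $F_i$. The $\Z/2\Z$ obstruction reduces to showing that for every nonempty subset $S\subseteq\{1,\ldots,m\}$ the product $\prod_{i\in S}\Disc_t F_i(t,f_\a)$ is not a square in $K$. I would prove this by regarding each discriminant as a polynomial in $\a$, analyzing its irreducible factorization in $\F_q[\a]$ via the geometry of the plane curve $F_i(t,x)=0$, and using absolute irreducibility of each $F_i$ together with non-association of distinct $F_i,F_j$ to exhibit, in any such product, an irreducible factor of odd multiplicity. The $S_3$ obstruction is present only in low-degree exceptional cases and should be ruled out by a similar but finer resolvent-cubic argument, or alternatively by producing---via the strong transitivity already exploited in the proof of Theorem \ref{main}---explicit Frobenius elements in $G$ whose cycle structures are incompatible with any $S_3$ identification between factors.

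Once $G=\prod_{i=1}^m S_{N_i}$ has been established, the conclusion follows immediately from the same function-field Chebotarev density bounds used in the proof of Theorem \ref{main}: the decomposition of $F_i(t,f)$ into $\F_q[t]$-irreducible factors of prescribed degrees corresponds bijectively to the cycle type of the $i$-th component of Frobenius, and since $G$ splits as a direct product the Chebotarev count factors as $\prod_i\mathcal{P}(s_{i1},\ldots,s_{iM_i})\cdot q^{n+1}(1+O(q^{-1/2}))$, with error term depending only on $m$, $\deg F_i$, and $n$ through the geometric complexity of the corresponding cover.
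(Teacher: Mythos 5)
Your overall strategy is the paper's: reduce to showing that the generic Galois group $G$ of $\prod_i F_i(t,\f)$ over $\F_q(\a)$ is the full product $\prod_i S_{N_i}$, then apply the explicit function-field Chebotarev theorem (Theorem \ref{equidist}), with the cycle-type count factoring because $G$ is a direct product. Two remarks on the places where your plan diverges from, or under-specifies, the paper's execution.

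First, your separate treatment of the $S_3$ common quotient (for $N_i\in\{3,4\}$) is unnecessary. Once you know each projection $G\to S_{N_i}$ is onto and the sign homomorphism $G\to\prod_i S_{N_i}/A_{N_i}\cong\{\pm1\}^m$ is onto, you are done: this is exactly \cite[Lemma 3.2]{constant}, cited in the paper as Lemma \ref{lem1}. The reason the sign condition already kills the $S_3$ and $S_4$ Goursat obstructions is that $\mathrm{Out}(S_3)=\mathrm{Out}(S_4)=1$; any Goursat identification $\phi\colon S_{N_i}/K_i\to S_{N_j}/K_j$ between common quotients is therefore inner, hence sign-preserving, so the image of the corresponding proper subgroup under the sign map is diagonal, never all of $\{\pm1\}^2$. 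So a ``resolvent-cubic'' or ``explicit Frobenius'' argument is not needed; surjectivity of projections plus sign independence is a complete set of conditions.

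Second, and this is the real content, the sign-independence step is where all the work lives and your proposal leaves it at the level of a heuristic. The paper proves (Proposition \ref{propsign}) that $\Disc_t F(t,\f)$ is a non-square for every subproduct $F$ of the $F_i$, and does so not by a geometric analysis of the irreducible factorization of the discriminant in $\F_q[\a]$, but by a weighted-degree argument: assigning weight $w(a_j)=j$, the leading form of $\Disc_t F(t,\f)$ is exactly $\Disc_t F(0,\f)$, and the statement reduces to the known constant-coefficient case \cite[Prop.\ 1.7]{constant}. This reduction is precisely where the monicity-in-$x$ hypothesis and the hypothesis $n\ge\sl F_i$ enter; your proposal does not engage with those hypotheses, yet without them the leading form need not be the constant-coefficient discriminant. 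Moreover, this discriminant argument is only valid for odd $p$; for $p=2$ the paper replaces $\Disc$ by the Berlekamp discriminant and proves the analogous Proposition \ref{52}, using that $n\ge 3$. Neither the role of the slope hypothesis nor the characteristic-two case appears in your plan, so as written the $\Z/2$ step is a genuine sketch rather than a proof. If you flesh it out via the paper's weighted-degree reduction (or find a genuinely workable geometric alternative) the rest goes through.
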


The connection between decompositions of polynomials and cycle structures of permutations will be made clear in the following section. Meanwhile note that for
absolutely irreducible $F_1,\ldots,F_m$, Theorem \ref{main_dec} implies Theorem \ref{main}, since the probability of a permutation in $S_{N_i}$ being a full (i.e. length $N_i$) cycle is
$1/N_i$. For not absolutely irreducible $F_i$ a similar result can be obtained with the usual permutation groups replaced by certain permutational wreath products. See Theorem \ref{thmgalois} in the next section.

As a byproduct we will also obtain the following result, which is a generalised Chowla conjecture for polynomials (the original Chowla conjecture for integers appears in \cite{chowla}):

\begin{thm}\label{chowla} Let $F_1,\ldots,F_m$ and $n$ satisfy the conditions of Theorem $\ref{main}$, except the condition $n\ge 3$ is not required if $q$ is odd (if $q$ is even we still require $n\ge 3$). Then for every sequence $s_1,\ldots,s_m=\pm 1$ of signs, the number of $f\in\F_q[t],\deg f=n$ such that $\mu(F_i(t,f))=s_i$ for $1\le i\le m$ is $$\frac{q^{n+1}}{2^m}\lb 1+O_{k,\deg F_i,n}\lb q^{-1/2}\rb\rb.$$
Here $\mu$ denotes the M\"{o}bius function for $\F_q[t]$.\end{thm}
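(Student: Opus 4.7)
The plan is to reduce Theorem~\ref{chowla} to a character sum over the Galois group $G$ already computed in the proof of Theorem~\ref{main}, together with a standard character-expansion trick. For any $s\in\{\pm 1\}$, since $\mu$ vanishes on non-squarefree polynomials, we have the identity
$$\mathbf{1}[\mu(g)=s] = \tfrac{1}{2}\bigl(\mathbf{1}[g\text{ squarefree}] + s\cdot\mu(g)\bigr).$$
Taking the product over $i=1,\ldots,m$ and expanding yields
$$\#\{f:\mu(F_i(t,f))=s_i\,\forall i\} = \frac{1}{2^m}\sum_{S\subseteq\{1,\ldots,m\}}\Bigl(\prod_{i\in S}s_i\Bigr)T_S,$$
where $T_S := \sum_{\deg f = n}\prod_{i\in S}\mu(F_i(t,f))\prod_{i\notin S}\mathbf{1}[F_i(t,f)\text{ squarefree}]$. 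The term $T_\emptyset$ counts $f$ such that every $F_i(t,f)$ is squarefree; since the $F_i$ are separable over $\F_q(t)$, each generic discriminant $\Disc_t F_i(t,\sum_j a_jt^j)\in\F_q[a_0,\ldots,a_n]$ is a nonzero polynomial, so its zero locus contains only $O(q^n)$ rational points and therefore $T_\emptyset = q^{n+1}(1+O(q^{-1}))$.

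For $S\neq\emptyset$ I use the identity $\mu(g)=(-1)^N\mathrm{sgn}(\sigma_g)$ valid for squarefree $g\in\F_q[t]$ of degree $N$, where $\sigma_g$ is the Frobenius viewed as a permutation of the $N$ roots. Substituting this for each $i\in S$ and absorbing the negligible non-squarefree contribution gives
$$T_S = \epsilon_S\sum_{\deg f=n}\chi_S(\sigma_f) + O(q^n),$$
where $\sigma_f$ is the joint Frobenius class in the Galois group $G$ from the proof of Theorem~\ref{main}, $\chi_S:=\prod_{i\in S}\mathrm{sgn}_i$ is the product of sign characters on the component groups $G_i\subseteq S_{N_i}$, and $\epsilon_S\in\{\pm 1\}$ depends only on $(N_i)_{i\in S}$. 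The Galois computation shows that each $G_i$ contains odd permutations (being $S_{N_i}$ in the absolutely irreducible case and an appropriate wreath product of symmetric groups in general), so $\chi_S$ is a nontrivial character of $G$ whenever $S\neq\emptyset$. Feeding this into the Chebotarev/Deligne bound for nontrivial characters exactly as in the proof of Theorem~\ref{main} produces $\sum_f\chi_S(\sigma_f)=O(q^{n+1/2})$, whence $T_S=O(q^{n+1/2})$. Summing over $S$ then yields the announced asymptotic $\frac{q^{n+1}}{2^m}(1+O(q^{-1/2}))$.

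The bulk of the proof -- the explicit determination of $G$ -- is identical to that used for Theorem~\ref{main}. The only extra input here is the nontriviality of $\mathrm{sgn}|_{G_i}$ for every $i$, a codimension-one condition considerably weaker than the multiple-transitivity properties underlying Theorems~\ref{main} and \ref{main_dec} and amounting to showing that the generic discriminant of $F_i(t,\sum a_jt^j)$ is not a square in $\F_q(a_0,\ldots,a_n)$. This weaker requirement is precisely why the hypothesis $n\ge 3$ can be relaxed to $n\ge\sl F_i$ in odd characteristic, and verifying this discriminant non-squareness property in the non-absolutely-irreducible and small-$n$ cases is the main technical obstacle beyond what is already done in the proof of Theorem~\ref{main}.
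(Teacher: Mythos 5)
Your reduction via the indicator identity $\mathbf{1}[\mu(g)=s]=\tfrac12(\mathbf{1}[g\text{ sqf}]+s\mu(g))$ and the resulting character expansion over subsets $S$ is correct and is essentially the Carmon--Rudnick mechanism that the paper invokes. The bounding of $T_\emptyset$ and the plan to control $T_S$ via nontrivial characters of $G$ are also sound. But there is a genuine gap at the crucial step, and it is precisely the step where the paper does work.

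You assert that $\chi_S=\prod_{i\in S}\mathrm{sgn}_i$ is nontrivial on $G$ ``because each $G_i$ contains odd permutations.'' This implication is false for a general subgroup $G\subseteq\prod_i G_i$: take $G=\{(\sigma_1,\sigma_2):\mathrm{sgn}\,\sigma_1=\mathrm{sgn}\,\sigma_2\}$ inside $S_{N_1}\times S_{N_2}$ --- each projection is onto and contains odd permutations, yet $\mathrm{sgn}_1\cdot\mathrm{sgn}_2$ is trivial. What is actually required is the \emph{linear independence} of the characters $\mathrm{sgn}_i|_G$, equivalently (in odd characteristic) the multiplicative independence of $\Disc_tF_i(t,\f)$ in $\k(\a)^\times/\k(\a)^{\times 2}$, i.e.\ that $\Disc_t\prod_{i\in S}F_i(t,\f)$ is a non-square for \emph{every} nonempty $S$. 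This is exactly what Proposition~\ref{propsign} establishes (applied to $F=\prod_{i\in S}F_i$, which still satisfies $n\ge\mathrm{sl}\,F$ since slope is subadditive under products). Your last paragraph reinforces the conflation by saying the extra input is ``showing that the generic discriminant of $F_i(t,\sum a_jt^j)$ is not a square'' for each $i$ individually --- that is weaker than what is needed.

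The implication you want does follow from the full Galois computation $G=\prod_i S_{N_i}$, but then you have (a) used the classification of finite simple groups, which the paper explicitly avoids for this theorem, and more importantly (b) restricted to $n\ge 3$, because the $(n+1)$-transitivity argument needs $n\ge 3$ to produce 4-transitivity for Theorem~\ref{thmgroup}. Theorem~\ref{chowla} is stated to hold for odd $q$ with only $n\ge\mathrm{sl}\,F_i$, allowing $n=1,2$, and in that regime the full Galois determination is simply not available. The discriminant-independence route is therefore not an optional simplification but the only route to the stated generality. Finally, your proposal says nothing about even characteristic, where the usual discriminant degenerates and one must use Berlekamp discriminants and Proposition~\ref{52} (which is also why the $n\ge 3$ hypothesis is retained for even $q$). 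In summary: the architecture of the proof is right, but the one genuinely new ingredient --- Propositions~\ref{propsign} and~\ref{52}, the multiplicative/additive independence of (Berlekamp) discriminants modulo squares/Artin--Schreier --- is exactly the point you have skipped over.
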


It is easy to see that in the case of absolutely irreducible polynomials Theorem \ref{chowla} follows from Theorem \ref{main_dec}, but in fact if $q$ is odd it follows directly from the much simpler Proposition \ref{propsign} on the multiplicative independence of the discriminants $\Disc_tF_i(t,a_0+\ldots+a_nt^n)\in\fqb(a_0,\ldots,a_n)$ ($a_0,\ldots,a_n$ being free variables) modulo squares in $\fqb(a_0,\ldots,a_n)^\times$. Similarly, in the even characteristic case it follows from Proposition 5.2, which is a similar statement about the linear independence of Berlekamp discriminants. The proof of Theorem \ref{chowla} or Propositions \ref{propsign}, 5.2 does not require the classification of finite simple groups or any other non-elementary fact from group theory. The equivalence of Theorem \ref{chowla} and Propositions \ref{propsign}, 5.2 is shown by the method of Carmon and Rudnick \cite{carmonrudnick}, which they applied to the special case of Theorem 
\ref{chowla} with $F_i=x+h_i$ for $h_i\in\F_q[t]$ with $n\ge\deg h_i$ and $q$ odd. Carmon recently generalised the result to even characteristic \cite{carmon}. 
The only part of their proof which requires this special form is proving Proposition \ref{propsign} for this case.
In fact the only part of the present work that requires the conditions that all the $F_i$ are monic in $x$ and that $n\ge\sl F_i$ for all $i$ is in the proof of Propositions \ref{propsign}, 5.2, so if one can prove Theorem \ref{chowla} or Propositions \ref{propsign}, 5.2 without these conditions then one can dispense with them in all of our results. We conjecture that for $n\ge 3$ these conditions are not required.

Next we state our second set of results obtained by a different method, which apply to not necessarily monic $F_1,\ldots,F_m$. We will need to consider the
possibly reducible affine plane curve $C$ over $\F_q$ defined by the equation $F(t,x)=0$, where $F=\prod_{i=1}^mF_i$. For a point $P\in C(\fqb)$ we denote by
$m_P$ its multiplicity in $C$, i.e. the degree of the lowest-degree form appearing in the Taylor expansion of $F(t,x)$ around $P$.

\begin{thm}\label{thmnm} Let $F_1,\ldots,F_m\in\F_q[t][x]$ with $\deg_x F_i>0$ be non-associate irreducible polynomials which are separable over $\F_q(t)$, i.e. $F_i\not\in\F_q[t][x^p]$. Set $F=\prod_{i=1}^m F_i$. Let $C/\fqb$ be the $($possibly reducible$)$ affine plane curve defined by $F(t,x)=0$. Let $n\ge 3$ be a natural number satisfying
$$n>\#\{P\in C(\fqb)|m_P>2\}.$$
Denote $N_i=\deg_t F_i(t, a_nt^n+\ldots+a_0)$ where $a_0,\ldots,a_n$ are free variables.
Denote by $\mu_i$ the number of irreducible factors into which $F_i(t,x)$ splits over $\fqb$.
Assume that $$p>\max_{1\le i\le m} N_i.$$
Then the assertions of Theorem $\ref{main}$, Theorem $\ref{main_dec}$ and Theorem $\ref{chowla}$ hold in this case as well.
\end{thm}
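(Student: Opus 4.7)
The plan is to follow the same overall strategy as for Theorem \ref{main}: via the function-field Chebotarev density theorem established earlier in the paper, the counting problem in Theorem \ref{thmnm} reduces to determining the geometric Galois group $G_i=\Gal(F_i(t,f(t))/\fqb(a_0,\ldots,a_n))$ for each $i$, where $f(t)=a_nt^n+\ldots+a_0$, together with the joint Galois group $G\subseteq\prod_iG_i$. The target is $G_i=S_{N_i}$ and $G=\prod_iG_i$ in the absolutely irreducible case; the general case follows by base-changing each $F_i$ to $\F_{q^{\mu_i}}$, where it splits into $\mu_i$ absolutely irreducible factors cyclically permuted by Frobenius, producing the claimed wreath-product structure. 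What differs from Theorem \ref{main} is the method of computing $G_i$: in place of a strong transitivity theorem plus the classification of finite simple groups, one uses a direct geometric analysis of the parameter family.

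To prove $G_i=S_{N_i}$ in the absolutely irreducible case, I would verify (i) transitivity, (ii) primitivity, and (iii) containment of a transposition; a classical result then forces $G_i=S_{N_i}$ without invoking the classification of finite simple groups. Transitivity follows from the absolute irreducibility of $F_i(t,f(t))\in\fqb(a_0,\ldots,a_n)[t]$, which holds because the vanishing locus $\{(t,a_0,\ldots,a_n):F_i(t,f(t))=0\}\subset\A^{n+2}$ projects onto the absolutely irreducible curve $C_i:F_i=0$ via $(t,a_0,\ldots,a_n)\mapsto(t,f(t))$ with fibers isomorphic to affine $n$-spaces, hence is itself absolutely irreducible. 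Primitivity follows from a parallel geometric irreducibility argument applied to the variety of ordered pairs of distinct roots.

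The heart of the proof is step (iii). Consider the discriminant $D(a_0,\ldots,a_n)=\Disc_tF_i(t,f(t))\in\fqb[a_0,\ldots,a_n]$; its zero locus parametrizes tuples for which the parameter curve $x=f(t)$ is tangent to $C_i$ at a smooth point or passes through a singular point of $C_i$. The hypothesis $n>\#\{P\in C(\fqb):m_P>2\}$ provides enough parameter freedom to exhibit, by a dimension count, a specialization $(a_0^*,\ldots,a_n^*)$ at which $x=f^*(t)$ is simply tangent to $C_i$ at exactly one smooth point, transverse at every other intersection, and at which $D$ vanishes simply. Because the assumption $p>\max_iN_i$ excludes wild ramification, the inertia group at such a specialization is tame and cyclic of order two, hence acts as a transposition on the $N_i$ roots of $F_i(t,f(t))$, giving the required element of $G_i$.

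The main obstacle lies in the specialization argument: one must verify that the two numerical hypotheses really guarantee a specialization producing exactly one new coincident pair of roots, not a triple collision nor two simultaneous pair-collisions, each of which would deposit an element of the wrong cycle type into $G_i$. The condition $n>\#\{P:m_P>2\}$ supplies enough parameter dimensions to slide the parameter curve so as to avoid all high-multiplicity singular points of $C$, while $p>\max_iN_i$ keeps ramification tame so that inertia cycle types are read directly off local intersection multiplicities. Finally, the joint independence $G=\prod_iG_i$ is obtained via Goursat's lemma: the non-associate hypothesis on the $F_i$ precludes any birational relation between distinct $C_i$ and $C_j$ compatible with the $t$-projection, so the only possible common simple quotient between $G_i=S_{N_i}$ and $G_j=S_{N_j}$, namely the sign character, must be ruled out by establishing multiplicative independence of the discriminants $\Disc_tF_i(t,f(t))$ modulo squares in $\fqb(a_0,\ldots,a_n)^\times$, in the spirit of Proposition \ref{propsign}.
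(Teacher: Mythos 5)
Your high-level plan (reduce via Chebotarev to a Galois-group computation, establish transitivity, exhibit a transposition, deduce $G_i=S_{N_i}$ and assemble $G=\prod S_{N_i}$) matches the paper's strategy, and your recognition that $p>\max N_i$ controls tameness and that $n>\#\{P:m_P>2\}$ controls the singular points is on the right track. But the heart of the argument --- the existence of a place whose inertia is a transposition --- is not actually established by the ``dimension count'' you describe, and this is a genuine gap. It is not enough to observe that the discriminant hypersurface $\Disc_tF_i(t,\f)=0$ in $\A^{n+1}$ has enough room to slide off the high-multiplicity singular points: a priori the discriminant divisor might consist \emph{entirely} of the bad linear components $\f(\tau)-\xi$ for points $P=(\tau,\xi)$ with $m_P>2$ (together with factors in $\k[a_n]$), and then there would be no divisor whose generic specialization has a simple double root. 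What rules this out in the paper is a specific two-step mechanism that is absent from your proposal: first, an affine change of variables replacing $a_0,\ldots,a_{n-2}$ by $a'_l=\f(\tau_l)-\xi_l$ (for $n-1$ chosen points $(\tau_l,\xi_l)$ on $C$ including all $P$ with $m_P>2$; here is where $n>\#\{P:m_P>2\}$ is used), so that all the ``bad'' linear forms land in the subfield $\K=\k(a'_0,\ldots,a'_{n-2},a_n)$ and become constants when one views $\k(\a)=\K(a_{n-1})$ as a one-variable function field over $\K$; and second, the triviality of the \emph{tame} fundamental group of $\A^1_{\K}$ (this is where $p>N_i$ enters) to force ramification at some finite place $H\in\K[a_{n-1}]$ which by construction is not one of the bad hyperplanes. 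Only then does the classification of codimension-one degenerations (Proposition~\ref{keyprop}: either separable, one double root, or a root of exact multiplicity $m_P$ at a curve point $P$) guarantee the inertia at $H$ is a transposition. Without the coordinate change and the tame-$\pi_1$ existence argument, your specialization may simply not exist.

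Two secondary remarks. First, your step~(ii) as worded proves $2$-transitivity, not primitivity (irreducibility of the space of ordered pairs of distinct roots is exactly $2$-transitivity); that is fine, and is in fact what the paper's Proposition~\ref{trans} supplies, so there is no error, just a mislabel. Second, for the joint statement $G=\prod S_{N_i}$ you invoke multiplicative independence of discriminants modulo squares in the spirit of Proposition~\ref{propsign}; but that proposition is only proved in the monic case, and in the non-monic setting it would need a new proof. The paper avoids this entirely: the transposition constructed above is an element of $G$ that transposes two roots of $F_i(t,\f)$ and \emph{fixes every root of $F_j(t,\f)$ for $j\neq i$}, because $H$ can only ramify in the extension generated by the roots of one $F_i$. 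These transpositions already show the sign map $G\to\{\pm1\}^m$ is onto, and combined with $G_i=S_{N_i}$ and Lemma~\ref{lem1} they give $G=\prod S_{N_i}$ without any discriminant computation.
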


The proof of Theorem \ref{thmnm} is also based on a Galois group computation, like the proof of theorems \ref{main}, \ref{main_dec} and \ref{chowla}, but the computation is more direct and does not use any non-elementary results from group theory. It will be carried out in sections \ref{secnm} and \ref{seckeyprop}.

\section{Galois groups, Frobenius classes and equidistribution}\label{secequidist}

Let $p$ be a prime number, $q$ a power of $p$.
Let $g\in\F_q[t]$ be a separable polynomial (i.e. having distinct roots over $\fqb$) of degree $N$. The Frobenius map $\Fr_q$ (given by $y\mapsto y^q$) defines a permutation of the
roots of $g$, which gives a well-defined conjugacy class $\Th(g)$ of the symmetric group $S_N$. The degrees of the prime factors of $g$ correspond to the
cycle lengths of $\Th(g)$. In particular $g$ is irreducible iff $\Th(g)$ is (the conjugacy class of) a full cycle. We call $\Th(g)$ the Frobenius class of $g$. If $g_1,\ldots,g_m\in\F_q[t]$ are separable polynomials with $\deg g_i=N_i$ we get a conjugacy class $\Th(g_1,\ldots,g_m)$ in $S_{N_1}\times\ldots\times S_{N_m}$ by taking the product of the individual Frobenius classes $\Th(g_i)$. We call $\Th(g_1,\ldots,g_m)$ the Frobenius class of $g_1,\ldots,g_m$.

Let $F_1,\ldots,F_m\in\F_q[t][x],\deg_x F_i>0$ be non-associate, irreducible and separable over $\F_q(t)$, $n$ a natural number. Set $F=\prod_{i=1}^m F_i$. 
Note that by our assumptions $F$ is separable over $\F_q(t)$. Let $a_0,\ldots,a_n$ be free variables, 
$\f=a_nt^n+\ldots+a_0\in\F_q[\a,t]$ ($\a$ is a shorthand for $a_0,\ldots,a_n$) and
$N_i=\deg_t F_i(t,\f)$. 
\\ \\
{\bf Convention.} For the rest of the paper the asymptotic big-$O$ notation always implies a constant depending only on $m$, $\deg F_i$ for $1\le i\le m$ and $n$.
\\

To proceed further we need the following lemma:

\begin{lem}\label{lemsep}
Under the above assumptions on the $F_i$ and $F=\prod F_i$ the polynomial $F(t,\f)\in \F_q[\a][t]$ is separable over $\F_q(\a)$.
For all but $O(q^n)$ of the polynomials $f\in\F_q[t]$ such that $\deg f=n$ $($the total number of such polynomials is $q^n(q-1))$, the polynomials $F_i(t,f)\in\F_q[t]$ have degree $N_i$ and $F(t,f)\in\F_q[t]$ is separable.
\end{lem}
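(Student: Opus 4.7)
The plan is to establish the separability statement first, then deduce the counting statement by a standard polynomial-nonvanishing argument. For the separability, I would work geometrically with the incidence variety $V = \{(\a, t) \in \mathbb{A}^{n+1} \times \mathbb{A}^1 : F(t, \f(\a, t)) = 0\}$, decomposed as $V = \bigcup_{i=1}^m V_i$ with $V_i = \{F_i(t, \f) = 0\}$. The key observation is that the morphism $\sigma \colon (\a, t) \mapsto (t, \f(\a, t))$, after the coordinate change introducing $y = \f(\a, t)$ in place of $a_0$, becomes the projection $\mathbb{A}^n \times \mathbb{A}^2 \to \mathbb{A}^2$ which forgets the $(a_1, \ldots, a_n)$-coordinates. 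Hence $V_i = \sigma^{-1}(C_i) \cong \mathbb{A}^n \times C_i$ is irreducible and reduced of dimension $n+1$, where $C_i \subset \mathbb{A}^2$ is the plane curve $F_i(t,x)=0$ (irreducible and reduced since the irreducible $F_i$ is squarefree). Separability of $F(t, \f)$ in $t$ over $\F_q(\a)$ is equivalent to the projection $\pi \colon V \to \mathbb{A}^{n+1}_\a$ being generically \'etale, equivalently, the locus $V' = V \cap \{\partial_t[F(t, \f(\a, t))] = 0\}$ having image of dimension strictly less than $n+1$ in $\mathbb{A}^{n+1}_\a$.

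To verify this I would show $V' \cap V_i \subsetneq V_i$ for each $i$. On $V_i$ we have $\partial_t F(t, \f) = \partial_t[F_i(t, \f)] \cdot \prod_{j \neq i} F_j(t, \f)$; the second factor is nonzero in the integral domain $\F_q[\a, t]/(F_i(t, \f))$ since $V_j \neq V_i$ for $j \neq i$. The main point is to show $\partial_t[F_i(t, \f)] = F_{i,t}(t, \f) + F_{i,x}(t, \f) \cdot \f_t(\a, t) \not\equiv 0$ on $V_i$. In the coordinates $(a_1, \ldots, a_n, t, y)$, $V_i$ has coordinate ring $\F_q[a_1, \ldots, a_n, t, y]/(F_i(t, y))$, and we view $\partial_t[F_i(t, \f)]$ as a polynomial in $a_1, \ldots, a_n$ with coefficients in $\F_q[t, y]/(F_i)$. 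Since $\f_t = \sum_{j=1}^n j a_j t^{j-1}$, the coefficient of $a_1$ equals $F_{i,x}(t, y)$. If this were zero modulo $F_i$ then $F_i \mid F_{i, x}$ in $\F_q[t, x]$; combined with $\deg_x F_{i,x} < \deg_x F_i$ this forces $F_{i,x} \equiv 0$, i.e., $F_i \in \F_q[t][x^p]$, contradicting the separability of $F_i$ in $x$.

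Given the separability, the second statement is a standard Schwartz--Zippel count: the leading coefficient (in $t$) of $F_i(t, \f) \in \F_q[\a][t]$ is a nonzero polynomial $\ell_i(\a) \in \F_q[\a]$ (nonzero by the definition of $N_i = \deg_t F_i(t, \f)$), and $\Disc_t F(t, \f) \in \F_q[\a]$ is a nonzero polynomial of bounded degree by what was just proved. Since a nonzero polynomial in $n+1$ variables of bounded degree has $O(q^n)$ zeros in $\F_q^{n+1}$, the number of $f \in \F_q[t]$ with $\deg f = n$ for which some $\ell_i$ vanishes at the coefficient vector of $f$, or for which $F(t,f)$ is inseparable, is $O(q^n)$. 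The main obstacle is the geometric-algebraic verification of the nontriviality of $\partial_t[F_i(t, \f)]$ on the irreducible variety $V_i$ in the second paragraph; isolating the coefficient of $a_1$ there is precisely what pinpoints the separability hypothesis on $F_i$ in $x$.
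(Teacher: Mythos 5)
Your proof is correct, but I can't directly compare it to the paper's: the paper discharges Lemma \ref{lemsep} with a bare citation to Rudnick's paper \cite{sqfree} and gives no argument of its own. Your argument is a valid self-contained replacement, and the structure is sound. A few points worth highlighting on why each step works. The shearing coordinate change $(a_0,\ldots,a_n,t)\mapsto(a_1,\ldots,a_n,t,y)$ with $y=\f(\a,t)$ is an automorphism of $\A^{n+2}$, so it legitimately identifies $V_i=\{F_i(t,\f)=0\}$ with $\A^n\times C_i$ both set-theoretically and scheme-theoretically; irreducibility and reducedness of $V_i$ then come for free from the irreducibility of $F_i$. The expansion $\partial_t F(t,\f)=\sum_j\partial_t[F_j(t,\f)]\prod_{k\ne j}F_k(t,\f)$ shows that on $V_i$ only the $j=i$ term survives, and the factor $\prod_{j\ne i}F_j(t,y)$ is nonzero modulo $F_i(t,y)$ precisely because the $F_j$ are non-associate irreducibles. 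The decisive point — that the coefficient of $a_1$ in $\partial_t[F_i(t,\f)]=F_{i,t}(t,y)+F_{i,x}(t,y)\sum_{j\ge 1}ja_jt^{j-1}$, viewed as a polynomial in $a_1,\ldots,a_n$ over $\F_q[t,y]/(F_i)$, is exactly $F_{i,x}(t,y)$, and that this cannot vanish mod $F_i$ because $\deg_yF_{i,x}<\deg_yF_i$ would then force $F_{i,x}=0$ identically — is exactly where the separability hypothesis $F_i\notin\F_q[t][x^p]$ enters, and it is the most common way this step fails if one is careless; you handle it correctly. (This also requires $n\ge 1$ so that $a_1$ exists, which is implicit in the paper's running assumption that $n$ is a natural number.) Finally, your two-stage Schwartz--Zippel reduction — first discard the $O(q^n)$ vectors $\a$ where some leading coefficient $\ell_i(\a)$ vanishes, so that the universal discriminant $\D_N(C_0(\a),\ldots,C_N(\a))$ actually detects separability, then discard its $O(q^n)$ zeros — is the right bookkeeping; the needed nonvanishing of $\Disc_tF(t,\f)\in\F_q[\a]$ is precisely what the first part establishes. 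In short, this is a correct and transparent proof of the lemma the paper chooses to outsource.
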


\begin{proof} This is proved in \cite{sqfree}.\end{proof}

The second part of the lemma implies that for all but $O(q^n)$ of the polynomials $f\in\F_q[t],\deg f=n$ (again note that the total number of such polynomials
is $q^n(q-1)$) the Frobenius class
$\Th(F_1(t,f),\ldots,F_m(t,f))$ is a well-defined conjugacy class in $S_{N_1}\times\ldots\times S_{N_m}$.

Let $L$ be the splitting field of $F(t,\f)$ over $\F_q(\a)$. Denote by $G$ its Galois group. It can be viewed as a subgroup of
$S_{N_1}\times...\times S_{N_m}$ by its action on the roots of each $F_i(t,\f)$. These roots are all distinct by the first assertion of Lemma \ref{lemsep}.
Let $\F_{q^\nu}$ be the algebraic closure of $\F_q$ in $L$. Denote by $G_1$ the set of $\sig\in G$ such that $\sig$ acts as the Frobenius map $\Fr_q$ on 
$\F_{q^\nu}$. It is a coset of the normal subgroup $\Gal(L/\F_{q^\nu}(\a))\ss G$.

The fundamental tool we will use in the present work is the following equidistribution result:

\begin{thm}\label{equidist} Let $F_1,\ldots,F_m\in\F_q[t][x],\deg_x F_i>0$ be non-associate, irreducible and separable over $\F_q(t)$, $F=\prod_{i=1}^m F_i$.
Here we do not assume that the $F_i$ are monic in $x$.
Let $n$ be a natural number, $a_0,\ldots,a_n$ be free variables, $\a=(a_0,\ldots,a_n)$, $\f=\sum_{j=0}^n a_jt^j$, $N_i=\deg_t F_i(t,\f)$.
Denote by $L$ the splitting field of $F(t,\f)$ over $\F_q(\a)$, $G=\Gal(L/\F_q(\a))$ its Galois group, $\F_{q^\nu}$ the algebraic closure of $\F_q$ in $L$.
Denote by $G_1$ the set of $\sig\in G$ acting as $\Fr_q$ on $\F_{q^\nu}$. Consider $G$ as a subgroup of $S_{N_1}\times...\times S_{N_m}$ via its action on the roots of $F_i(t,\f)$.

Then for every conjugacy class $C$ in $S_{N_1}\times...\times S_{N_m}$ we have
\begin{multline*}\#\{f\in\F_q[t],\deg f=n|\Th(F_1(t,f),\ldots,F_m(t,f))=C\}=\\ \frac{\#(C\cap G_1)}{\#G_1}q^{n+1}\lb 1+O_{m,\deg F_i,n}(q^{-1/2})\rb.$$
\end{multline*}
\end{thm}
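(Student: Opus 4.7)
My approach is to view the statement as an instance of the function-field Chebotarev density theorem applied to the $G$-Galois cover $\pi: Y \to V$ arising from the splitting field $L/\F_q(\a)$, where $V$ is an appropriate open subscheme of $\A^{n+1}_{\F_q}$. The plan is to (i) cut out $V$ so that $\pi$ becomes finite étale, (ii) match the geometric Frobenius at a point $\a \in V(\F_q)$ with the combinatorial class $\Th(F_1(t, f_\a), \ldots, F_m(t, f_\a))$ (where $f_\a = \sum_j a_j t^j$), and (iii) apply the Grothendieck-Lefschetz trace formula together with Deligne's Weil II bounds.

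For step (i), let $V$ be the complement of the closed locus in $\A^{n+1}_{\F_q}$ where $F(t, f_\a)$ is inseparable or some $F_i(t, f_\a)$ fails to have degree $N_i$. By Lemma \ref{lemsep}, $\#((\A^{n+1} \setminus V)(\F_q)) = O(q^n)$, which is absorbed into the claimed error. Let $Y$ be the normalization of $V$ in $L$. Since $F(t, \f)$ is separable over $\F_q(\a)$ and its discriminant does not vanish on $V$, the morphism $\pi$ is finite étale and Galois with group $G$; the identification of the algebraic closure of $\F_q$ in $L$ with $\F_{q^\nu}$ translates into the statement that $Y_{\fqb}$ has exactly $\nu$ connected components, cyclically permuted by $\Fr_q$.

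For step (ii), each $\a \in V(\F_q)$ determines a Frobenius conjugacy class $\Fr_\a \subset G$ via the $\Fr_q$-action on $\pi^{-1}(\a)$, and $\Fr_\a \subset G_1$ because any such Frobenius restricts to $\Fr_q$ on the constant subfield $\F_{q^\nu} \subset L$. Unwinding the construction of $L$ as the splitting field of $F(t, \f)$, the image of $\Fr_\a$ in $S_{N_1} \times \cdots \times S_{N_m}$ equals the permutation of the $\fqb$-roots of the $F_i(t, f_\a)$ induced by $\Fr_q$, i.e. $\Th(F_1(t, f_\a), \ldots, F_m(t, f_\a))$.

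For step (iii), expand the indicator of $C \cap G_1$ in irreducible characters of $G$. The $\nu$ characters factoring through $G/\Gal(L/\F_{q^\nu}(\a)) \cong \Gal(\F_{q^\nu}/\F_q)$ are constant on $G_1$ and combine to give the main term $\frac{\#(C \cap G_1)}{\#G_1} \cdot \#V(\F_q)$. For every other irreducible $\rho$, the associated lisse $\ell$-adic sheaf $\mathcal{L}_\rho$ on $V$ has no geometric monodromy invariants, so its top compactly supported cohomology $H^{2(n+1)}_c(V_{\fqb}, \mathcal{L}_\rho)$ vanishes; Deligne's Weil II then bounds the remaining traces of $\Fr_q$ on $H^i_c$ by $O(q^{i/2})$, producing an $O(q^{n+1/2})$ contribution per character via the Grothendieck-Lefschetz trace formula. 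The \emph{main obstacle} is making these bounds uniform in $\rho$ through Betti-number estimates on the $\mathcal{L}_\rho$ depending only on $m$, $\deg F_i$ and $n$; Katz-style bounds suffice. Alternatively, Weil II can be bypassed by fibering $V$ over $\A^n_{\F_q}$ via a generic linear projection and applying the one-dimensional Chebotarev (the Weil bound for curves) on each geometric fiber, then summing.
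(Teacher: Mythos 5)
Your outline is a correct blueprint for the $\ell$-adic cohomological proof. The paper itself does not carry out a proof of Theorem \ref{equidist}: it cites the result as standard, namely as a special case of \cite[Proposition 2.2]{constant} and as a consequence of the explicit Chebotarev theorem \cite[Theorem A.4]{andrade}, and it explicitly names the ``0-dimensional case of Deligne--Katz equidistribution'' as an alternative viewpoint --- which is precisely the viewpoint you flesh out. The gap you flag yourself (uniformity of the Betti-number bounds over the irreducibles $\rho$) is real but closes cleanly in this setting: $\#G$ is bounded purely in terms of $m$, $\deg F_i$ and $n$, so there are only boundedly many $\rho$ of bounded dimension, and each lisse sheaf $\mathcal{L}_\rho$ is a direct summand of the fixed permutation sheaf $\pi_*\overline{\Q}_\ell$ on $V$, whose compactly supported Betti numbers are controlled by the degrees of the equations cutting out $Y$ and $V$ (and hence by $m$, $\deg F_i$, $n$) via standard Bombieri--Katz estimates. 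The alternative you mention at the end --- fibering $V$ over $\A^n$ by a generic linear projection and applying the one-variable Weil/Chebotarev bound fiberwise --- is in fact closer in spirit to how the explicit Chebotarev theorem of \cite{andrade} that the paper actually invokes is proved. Two small points worth tightening if you write this up in full: you should note explicitly that the set of $f$ with $\deg f < n$ (i.e.\ $a_n = 0$) is also $O(q^n)$, so passing between $V(\F_q)$ and $\{f : \deg f = n\}$ is harmless; and the vanishing of $H^{2(n+1)}_c(V_{\fqb},\mathcal{L}_\rho)$ for $\rho$ not factoring through $G/\Gal(L/\F_{q^\nu}(\a))$ uses that $V_{\fqb}$ is connected together with Clifford theory (an irreducible $\rho$ of $G$ has trivial coinvariants under the normal geometric monodromy subgroup unless it kills that subgroup entirely).
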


This result is now quite standard and can be seen for example as a special case of \cite[Proposition 2.2]{constant}. A variant of Theorem \ref{equidist} appears as \cite[Theorem 3.1]{andrade}. There only 
the case of monic $F_i$ and $\nu=1$ is considered. However the result is deduced from a more general explicit Chebotarev theorem \cite[Theorem A.4]{andrade}, which implies Theorem \ref{equidist} in the same way.
Theorem \ref{equidist} can also be viewed as a 0-dimensional case of Deligne-Katz equidistribution \cite{KS}, with the Galois group acting as the monodromy group. Our results (especially Theorem \ref{main_dec}) can then be seen as a 0-dimensional disconnected fiber analogue of the much deeper equidistribution results of \cite{KS}.

Theorem \ref{equidist} reduces the study of the factorization statistics of $$F_1(t,f),\ldots,F_m(t,f)$$ to the computation of the Galois group
$G=\Gal(L/\F_q(\a))$ as a permutation group on the roots of $F_i(t,\f)$ over $\F_q(\a)$. It is this computation which is the heart of the present work.
One of the novelties in our work is that unlike all the previous work cited in the introduction we actually use Theorem \ref{equidist} in the computation of the Galois group (in the proof of multiple transitivity, see the next section) and not only apply it after the Galois group has been computed directly.

Theorem \ref{main_dec} would follow from Theorem \ref{equidist} if we can show that for $F_1,\ldots,F_m$ satisfying the conditions of Theorem \ref{main_dec}
the Galois group of $F(t,\f)$ ($F=\prod F_i$) over $\F_q(\a)$ is the maximal possible, i.e. $S_{N_1}\times...\times S_{N_m}$. In the general case (not necessarily absolutely irreducible $F_i$) the answer is a little bit more complicated and will be stated next.

Let $\G\in\F_q[t][x]$ be irreducible and separable. There is some minimal field $\F_{q^\mu}$ over which $\G$ splits into absolutely irreducible factors.
It is not difficult to see that the number of these factors is $\mu$ and they are transitively permuted by the Galois group $\Gal(\F_{q^\mu}/\F_q)$ which is cyclic
of order $\mu$.

\begin{thm}\label{thmgalois} Let $F_1,\ldots,F_m\in\F_q[t][x],\deg_x F_i=r_i>0$ and $n\ge 3$ satisfy the conditions of Theorem $\ref{main}$.
Let $a_0,\ldots,a_n$ be free variables, $\a=(a_0,\ldots,a_n)$, $\f=\sum_{j=0}^n a_jt^j$, $N_i=\deg_t F_i(t,\f)=r_in$ $($the last equality holds because
$n\ge\sl F_i)$.

Let $$F_i=\prod_{j=1}^{\mu_i}\G_{ij}, \G_{ij}\in\F_{q^{\mu_i}}[t][x]$$ be the decomposition of $F_i$ into absolutely irreducible factors.
It is easy to see that $\deg_t \G_{ij}(t,\f)=N_i/\mu_i$. Let $\F_{q^\mu}$ be the composite of all the $\F_{q^{\mu_i}}$.

Denote by $L$ the splitting field of $F(t,\f)$ over $\F_q(\a)$ and let $G=\Gal(L/\F_q(\a))$ be its Galois group, which can be considered a subgoup of 
$S_{N_1}\times...\times S_{N_m}$ through its action on the roots of $F_i(t,\f)$. Denote by $\Om_{ij}$ the set of roots of $\G_{ij}$ in $L$.
The Galois group $H_\mu=\Gal(\F_{q^\mu}/\F_q)=\Gal(\F_{q^\mu}(\a)/\F_q(\a))$ is isomorphic to $\Z/\mu$ and for each $i$ it acts on the set $\{\G_{ij}\}_{1\le j\le\mu_i}$ transitively, the action
factors through a principal action of $H_{\mu_i}=\Gal(\F_{q^{\mu_i}}/\F_q)\cong \Z/\mu_i$. For $\sig\in H_\mu$ we denote by $\Om_{ij}^\sig$ the set of roots of $\G_{ij}^\sig$. The Galois action of $\sig$ on the roots
sends $\Om_{ij}$ into $\Om_{ij}^\sig$.

The following holds: $G$ consists of all the permutations $\pi$ on $\bigcup_{ij}\Om_{ij}$ for which there exists some $\sig\in H_\mu$ such that 
$\pi\lb\Om_{ij}\rb=\Om_{ij}^\sig$ for each $i$ and $j$.
\end{thm}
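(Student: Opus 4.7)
The plan is to follow the two-step strategy signalled in the introduction: first establish the easy upper bound $G\subseteq\widetilde{G}$, where $\widetilde{G}$ denotes the group of permutations described in the theorem, and then prove the matching lower bound using the equidistribution Theorem \ref{equidist} together with the classification of finite simple groups (CFSG) and Proposition \ref{propsign}.

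For the upper bound, note that each $\G_{ij}$ is defined over $\F_{q^{\mu_i}}$ and all its roots (at $x=\f$) lie in $L$, so $\F_{q^\mu}\subseteq L$. Any $\tau\in G$ therefore restricts on $\F_{q^\mu}$ to some $\sigma\in H_\mu$, and applying $\tau$ to the equation $\G_{ij}(t,\f)=0$ yields $\G_{ij}^\sigma(t,\f)=0$, whence $\tau(\Om_{ij})=\Om_{ij}^\sigma$. This proves $G\subseteq\widetilde{G}$ and in particular that the restriction $G\to H_\mu$ is surjective with kernel $H:=\Gal(L/\F_{q^\mu}(\a))$. The theorem thus reduces to showing that $H=\prod_{i,j}\mathrm{Sym}(\Om_{ij})$.

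I would identify $H$ in two stages. \emph{Stage 1}: for each fixed $(i,j)$ the projection $H\to\mathrm{Sym}(\Om_{ij})$ is surjective. Transitivity is immediate from the absolute irreducibility of $\G_{ij}(t,\f)$ over $\F_{q^\mu}(\a)$. To upgrade to $2$-transitivity I would combine Theorem \ref{equidist} (applied over $\F_{q^\mu}$ and its constant-field extensions) with specializations $f$ producing Frobenius elements of small support on $\Om_{ij}$; the resulting counts of orbits on unordered pairs of roots force the image to be $2$-transitive. CFSG then pins the image down to $S_{N_i/\mu_i}$ or $A_{N_i/\mu_i}$ on degree grounds, and the alternating alternative is ruled out by Proposition \ref{propsign} via the existence of a specialization with odd Frobenius. \emph{Stage 2}: the surjection $H\to\prod_{i,j}\mathrm{Sym}(\Om_{ij})$ is the full product. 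Using Goursat's lemma one reduces to excluding non-trivial identifications between simple quotients of distinct factors; for $N_i/\mu_i\ge 5$ the only such quotient is the sign character, and the multiplicative independence of the discriminants of the $\G_{ij}(t,\f)$ modulo squares, asserted by Proposition \ref{propsign}, rules out any such identification. Small-degree factors can be handled by ad hoc inspection of the finitely many possible subgroup lattices.

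The main obstacle is the multi-transitivity step, which looks circular at first glance since Theorem \ref{equidist} feeds on the very Galois group one is trying to compute. The circularity is resolved by letting the constant-field extension degree vary while $H$ stays fixed: equidistribution over each extension imposes an independent constraint on $H$, and CFSG shows that only $S_{N_i/\mu_i}$ or $A_{N_i/\mu_i}$ can simultaneously satisfy them all. Once this has been achieved for a single $(i,j)$, Proposition \ref{propsign} provides the parity input needed both to exclude $A_{N_i/\mu_i}$ and to glue the factors together into the full direct product, completing the proof.
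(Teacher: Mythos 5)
Your overall scheme (upper bound $G\subseteq\widetilde G$, surjectivity of $G\to H_\mu$, reduction to showing $H=\Gal(L/\F_{q^\mu}(\a))$ is the full product, then projections-plus-glueing) matches the structure of the paper, which deduces Theorem~\ref{thmgalois} from Theorem~\ref{main_galois} (the statement over an algebraically closed constant field) and Lemma~\ref{lem1}. However, there is a genuine gap in your Stage~1.

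You claim that $2$-transitivity of the image of $H$ in $\mathrm{Sym}(\Om_{ij})$, together with CFSG ``on degree grounds,'' pins the image down to $S_{N_i/\mu_i}$ or $A_{N_i/\mu_i}$. This is false: there is a large zoo of $2$-transitive groups of essentially arbitrary degree, including $\mathrm{PGL}_2(q)$, $\mathrm{AGL}_d(p)$, $\mathrm{Sp}_{2d}(2)$ on $2^{d-1}(2^d\pm1)$ points, unitary and Ree groups, and so on. The degrees $N_i/\mu_i=r_in/\mu_i$ are not under control, so no ``degree grounds'' can rule these out after only $2$-transitivity. The correct threshold, used by the paper as Theorem~\ref{thmgroup}, is $4$-transitivity: by CFSG the only $4$-transitive groups are $S_N$, $A_N$ and the Mathieu groups, and the latter are contained in $A_N$ so are killed by the sign argument. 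The paper obtains $4$-transitivity because Proposition~\ref{trans} proves $(n+1)$-transitivity and the hypothesis includes $n\ge3$. Your proposal would also need $4$-transitivity, and your vaguer ``counts of orbits on unordered pairs'' argument only targets $2$-transitivity, so the step would fail as written.

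Relatedly, the paper's route to $(n+1)$-transitivity is a clean double count (Proposition~\ref{keycount}, via Lang--Weil on $V\subset C^{n+1}$) giving $\sum_f\ell_{n+1}(F_1(t,f))=q^{n+1}(1+O(q^{-1/2}))$, which by Theorem~\ref{equidist} translates to $\av{\ell_{n+1}(\sig)}_{\sig\in G}=1+O(q^{-1/2})$; since this average is rational with denominator dividing $N!$, passing to a large power of $q$ forces equality, and Lemma~\ref{lemtrans} converts that into $(n+1)$-transitivity. Your ``varying the constant-field extension'' intuition is a correct gloss on why the apparent circularity is harmless, but the precise mechanism is the bounded-denominator argument, not a CFSG elimination over varying $q$. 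Finally, your Stage~2 Goursat argument needs ad hoc work for small $N_i/\mu_i$; the paper's Lemma~\ref{lem1} (from \cite{constant}) handles the glueing uniformly given surjective projections and a surjective sign map, with no small-degree caveats.
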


Note that if all the $F_i$ are absolutely irreducible, i.e. $\mu_i=1$ for $1\le i\le m$, then we get $G=S_{N_1}\times...\times S_{N_m}$. Theorem \ref{maingalois} gives a complete description of the Galois group of $\F_i(t,\f)$ over $\F_q(\a)$ as a permutation group together with its map to $H_\mu=\Gal(\F_{q^\mu}/\F_q)\cong\Z/\mu$. It is now an elementary exercise on permutation groups to deduce Theorems \ref{main}, \ref{main_dec} and \ref{chowla} from Theorem \ref{equidist} and Theorem \ref{thmgalois}.
For example Theorem \ref{main} follows from the fact that the probability of a random permutation $\pi$ of $\bigcup_{ij}\Om_{ij}$ satisfying $\pi\lb\Om_{ij}\rb=\Om_{ij}^\sig$ for all $i,j$ for a given generator $\sig$ of $H_\mu$ (considered as a cyclic permutation on $\{1,\ldots,\mu_i\}$ for each $i$) being a full cycle on each $\bigcup_{j=1}^{\mu_i}\Om_{ij}$ is $\prod_{i=1}^m(\mu_i/N_i)$. The elementary proof of this fact is carried out in \cite{constant}. Theorem \ref{main_dec} follows at once from Theorem \ref{equidist}, Theorem \ref{thmgalois} and the remark immediately following the statement of Theorem \ref{thmgalois}.

The essential result from which Theorem \ref{thmgalois} will follow is the following.

\begin{thm}\label{main_galois} Let $p>2$ be a prime number and $\k=\fpb$. Let $$F_1,\ldots,F_m\in \k[t][x],\deg_x F_i=r_i>0$$ be non-associate, irreducible, monic in $x$ and separable over $\fpb(t)$. Let $n\ge 3$ be a natural number such that $n\ge\sl F_i$ for $1\le i\le m$. Let $a_0,\ldots,a_n$ be free variables, $\a=(a_0,\ldots,a_n)$, $\f=\sum_{j=0}^n a_jt^j$, $N_i=\deg_t F_i(t,\f)=r_in$.
Denote by $L$ the splitting field of $\prod_{i=1}^mF(t,\f)$ over $k(\a)$, $G=\Gal(L/\k(\a))$ its Galois group. Then $G$ is the full permutation group
$S_{N_1}\times...\times S_{N_m}$ acting on the roots of $F_i(t,\f)$ over $\k(\a)$.\end{thm}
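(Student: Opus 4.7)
The plan is to proceed in two stages. First, show that for each $i$ the projection $G_i\subseteq S_{N_i}$ of $G$ — equivalently, the Galois group of $F_i(t,\f)$ over $\k(\a)$ — is the full symmetric group $S_{N_i}$. Second, show that the splitting fields of the individual $F_i(t,\f)$ over $\k(\a)$ are pairwise linearly disjoint, which together with the first stage yields $G=S_{N_1}\times\cdots\times S_{N_m}$. Throughout, we may pass to a large finite subfield $\F_q\subseteq\k$ over which all the $F_i$ are defined and for which the algebraic closure of $\F_q$ in the splitting field is $\F_q$ itself (i.e.\ the $\nu$ of Theorem~\ref{equidist} equals $1$), so that Chebotarev over $\F_q$ reads off the geometric Galois group $G$ directly.

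For the first stage, the aim is to show that $G_i$ is $4$-transitive on the $N_i$ roots of $F_i(t,\f)$, after which the classification of finite $4$-transitive permutation groups (a consequence of the classification of finite simple groups) forces $G_i\supseteq A_{N_i}$ apart from a small list of exceptional $N_i\in\{11,12,23,24\}$ where $G_i$ could a priori be a Mathieu group. The Mathieu cases and the low cases $N_i\le 4$ (which arise only when $r_i=1$ and $n\in\{3,4\}$) are to be ruled out by direct cycle-structure arguments, e.g.\ by producing specialisations $f$ whose Frobenius class has a cycle length not supported by any Mathieu group. Finally, to promote $A_{N_i}$ to $S_{N_i}$ we exhibit an odd permutation in $G_i$ by showing $\Disc_t F_i(t,\f)\in\k(\a)^\times$ is not a square, which for $p>2$ is precisely the content of Proposition~\ref{propsign}.

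The heart of the argument, and the step I expect to be the main obstacle, is proving the $4$-transitivity of $G_i$. The novelty is to use Theorem~\ref{equidist} in the reverse direction from its usual arithmetic application. Suppose $G_i$ fails to be $k$-transitive for some small $k$; then Theorem~\ref{equidist} translates this failure into a specific asymptotic prediction for an appropriate factorisation statistic, for instance the number of $f\in\F_q[t]$ of degree $n$ for which $F_i(t,f)$ has at least $k$ distinct roots in $\F_q$, read off from the orbit structure of the Frobenius coset on ordered $k$-subsets of roots. The same statistic admits an independent geometric estimate as a point count on the affine incidence variety parametrising tuples $(f,\alpha_1,\beta_1,\ldots,\alpha_k,\beta_k)$ with $\deg f=n$, the $\alpha_\ell$ distinct, and $F_i(\alpha_\ell,\beta_\ell)=f(\alpha_\ell)-\beta_\ell=0$; via Lang--Weil and the geometry of the curve $F_i(t,x)=0$, this count agrees with the $S_{N_i}$-prediction up to $O(q^{-1/2})$. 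The mismatch between the two counts under the hypothesised failure of $k$-transitivity then yields the required contradiction.

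For the second stage, apply Goursat's lemma to $G\subseteq\prod_{i=1}^m S_{N_i}$. For $N_i\ge 5$ the only nontrivial proper normal subgroup of $S_{N_i}$ is $A_{N_i}$, so any proper $G$ surjecting onto each factor must be detected by a nontrivial linking among the $\Z/2$ quotients $S_{N_i}/A_{N_i}$; concretely, such a linking would force some nonempty product $\prod_{i\in I}\Disc_t F_i(t,\f)$ to be a square in $\k(\a)^\times$. The hypotheses that the $F_i$ are pairwise non-associate, absolutely irreducible and separable are exactly what is needed to rule this out via the multiplicative independence of the discriminants modulo squares — again a statement extracted from (the proof of) Proposition~\ref{propsign} — completing the reduction.
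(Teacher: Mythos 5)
Your proposal follows the paper's route step for step: multiple transitivity of each $G_i$ via Theorem~\ref{equidist} plus a Lang--Weil count of an incidence variety; CFSG to pass from $4$-transitivity to $S_{N_i}$; sign-independence via Proposition~\ref{propsign}; and an assembly lemma (your Goursat argument is exactly Lemma~\ref{lem1}, quoted from \cite{constant}). Two remarks. First, the extra work you propose to exclude the Mathieu groups by exhibiting Frobenius specialisations with unsupported cycle types is unnecessary: those groups are simple, hence automatically contained in $A_{N_i}$, so the non-squareness of the discriminant that you already invoke to rule out $A_{N_i}$ rules them out for free. The paper packages this as Theorem~\ref{thmgroup}: a $4$-transitive subgroup of $S_N$ not contained in $A_N$ equals $S_N$. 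Second, the paper proves transitivity directly rather than by contradiction, showing that the average of $\ell_{n+1}(\sigma)$ over $G_i$ equals $1$ on the nose (Proposition~\ref{transcrit}: this average is a rational number of bounded denominator which is $1+O(q^{-1/2})$ for every large power of $q$, hence must be exactly $1$), and then applying the elementary Lemma~\ref{lemtrans}, which says the average is $\ge 1$ with equality iff the action is $(n+1)$-transitive; the underlying point count is the same as the one you sketch.
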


The proof of Theorem \ref{main_galois} will occupy the next three sections and is the heart of the present work. Theorem \ref{thm_galois} follows directly from Theorem \ref{main_galois} applied to the factors of the $F_i$ over $\F_{q^\mu}$ (we use the notation of Theorem \ref{thmgalois}). To see that it can be applied to the factors it needs to be verified that $n\ge\sl F_i$ implies the same for the factors of $F_i$ over $\F_{q^\mu}$. But these factors have the same slope as $F_i$ since $\sl PQ=\sl P$ whenever $\sl P=\sl Q$. So Theorem 
\ref{main_galois} applies to the factors of the $F_i$ over $\F_{q^\mu}$.
Now in the notation of Theorem \ref{thmgalois} every $\sig\in H_\mu=\Gal(\F_{q^\mu}/\F_q)=\Gal(\F_{q^\mu}(\a)/\F_q(\a))$ lifts
to some $\sig'\in\Gal(L/\F_q(\a))$. It can then be composed with some element of $\Gal(L/\F_{q^\mu})$ (which can be chosen to permute each $\Om_{ij}$ as we please) to obtain any permutation of the form described in the assertion of Theorem \ref{thmgalois}.

Thus Theorem \ref{main_galois} implies Theorem \ref{thmgalois}, which in turn implies theorems \ref{main}, \ref{main_dec} and \ref{chowla}. By the same considerations Theorem \ref{thmnm} follows from the following
 
\begin{thm}\label{maingalois} Let $p$ be prime. Let $F_1,\ldots,F_m\in\fpb[t][x],\deg_x F_i>0$ be non-associate irreducible polynomials which are separable over $\fpb(t)$ and not necessarily monic in $x$. Denote $F=\prod_{i=1}^m F_i$. Let $C$ be the affine plane curve defined by $F(t,x)=0$.
Let $n\ge 3$ be a natural number such that $$n>\#\{P\in C(\fpb)|m_P>2\}$$ (the definition of the multiplicity $m_P$ is given just before the statement of Theorem \ref{thmnm}). 
Denote $\f=\sum_{j=0}^na_it^i\in\fpb(a_0,\ldots,a_n)[t]$, where $\a=(a_0,\ldots,a_n)$ are free variables and $N_i=\deg_t F_i(t,\f)$. 
Assume that $$p>\max_{1\le i\le m} N_i.$$ Let $L$ be the splitting field of $F(t,\f)$ over $\fpb(\a)$ and let $G=\Gal(L/\fpb(\a))$ be its Galois group, which we view as a subgroup of
$S_{N_1}\times...\times S_{N_m}$ via its action on the roots of $F_i(t,\f)$. Then in fact
$G=S_{N_1}\times...\times S_{N_m}$ is the full product of permutation groups.\end{thm}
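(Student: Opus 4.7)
The plan is to show, for each $i$, that the image $G_i$ of $G$ in $S_{N_i}$ equals $S_{N_i}$, and then to promote this to the full product structure by combining Goursat's lemma with multiplicative independence of the discriminants modulo squares.

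Transitivity and 2-transitivity of $G_i$ on the roots of $F_i(t,\f)$ I would deduce from irreducibility of the incidence varieties
\[V_i = \{(\a,t) : F_i(t,\f(\a,t)) = 0\}, \quad V_i^{(2)} = \{(\a,t_1,t_2) : t_1 \neq t_2,\ F_i(t_j,\f(\a,t_j)) = 0 \text{ for } j=1,2\},\]
via their projections to $C_i$ and to $\{(P_1,P_2)\in C_i \times C_i : t(P_1)\neq t(P_2)\}$ respectively. The fibers over geometric points of the base are affine subspaces of $\a$-space, cut out by one condition (for $V_i$) or two independent conditions (for $V_i^{(2)}$) of the shape $\sum_j a_j t(P)^j = x(P)$; combined with absolute irreducibility of $C_i$ (which is forced by the hypotheses on $F_i$ since $\fpb$ is algebraically closed), this makes $V_i$ and $V_i^{(2)}$ irreducible, yielding transitivity and 2-transitivity of $G_i$.

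The central step is producing a single transposition in $G_i$. I would analyse the discriminant divisor $\Delta_i \subset \A^{n+1}_\a$ of $F_i(t,\f(\a,t)) \in \fpb[\a][t]$. For a fixed smooth point $P \in C_i$, the locus of $\a$ for which $\f$ is tangent to $C_i$ at $P$ is cut out by two independent linear conditions (pass through $P$, match the tangent slope), so has dimension $n-1$; sweeping over the smooth locus of $C_i$ yields an $n$-dimensional irreducible component $\Delta_i'$ of $\Delta_i$. The hypothesis $n > \#\{P \in C(\fpb) : m_P > 2\}$ ensures that after fixing such a tangency, the $n-1$ free parameters remaining in $\a$ suffice to avoid every bad point of $C$ (each one imposing a further linear constraint), so a generic point of $\Delta_i'$ corresponds to an $\f$ that is simply tangent at one smooth point of $C_i$, transverse to $C_i$ at the remaining $N_i - 2$ intersections, and transverse to the rest of $C$ wherever else it meets it. The inertia of the cover $V_i \to \A^{n+1}_\a$ at such a generic point is cyclic of order two, tame (this is exactly where $p > N_i$ enters, excluding wild ramification), and acts on the fiber as a transposition exchanging the two coinciding sheets. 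Combined with 2-transitivity of $G_i$ and the fact that conjugation by a 2-transitive group sends any transposition to every transposition, this forces $G_i = S_{N_i}$.

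To assemble the $G_i$ into the full product $\prod_i S_{N_i}$, I would apply iterated Goursat: for $N_i \ge 5$, the only nontrivial simple quotient of $S_{N_i}$ is $\Z/2$, corresponding to the quadratic subfield $\fpb(\a)(\sqrt{\Disc_t F_i(t,\f)})$ of the splitting field of $F_i(t,\f)$, so the full product assertion reduces to multiplicative independence of the $\Disc_t F_i(t,\f)$ modulo squares in $\fpb(\a)^\times$---exactly the content of Proposition \ref{propsign} and its characteristic-two analogue Proposition 5.2. The few small cases $N_i \le 4$ are handled separately. The main obstacle in this plan is the transposition step: one must simultaneously certify that $\Delta_i'$ is a proper divisor, that its generic point corresponds to a simple tangency at a smooth point with no collision with bad points of $C$, and that the associated tame inertia is genuinely nontrivial; the three hypotheses $n \ge 3$, $n > \#\{m_P > 2\}$, and $p > \max_i N_i$ correspond precisely to these three verifications.
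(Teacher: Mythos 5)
Your proposal shares the overall architecture of the paper's argument (get $2$-transitivity of each $G_i$, produce a transposition, assemble via sign independence), but differs in technique at every stage, and two of those stages have genuine gaps.

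The geometric route to $2$-transitivity via irreducibility of $V_i$ and $V_i^{(2)}$ is fine and arguably cleaner than what the paper actually does; the paper instead proves $(n+1)$-transitivity arithmetically, by combining the Chebotarev-type equidistribution of Theorem \ref{equidist} with a Lang--Weil count of tuples of roots (Propositions \ref{keycount} and \ref{transcrit}). Both approaches work here.

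The transposition step is where the real trouble lies. The paper does not work with the discriminant divisor directly; instead it (i) changes coordinates, replacing $a_0,\dots,a_{n-2}$ by $a_l'=\f(\tau_l)-\xi_l$ where the $(\tau_l,\xi_l)$ are $n-1$ chosen points of $C$ containing \emph{all} points with $m_P>2$, (ii) uses $p>N_i$ to invoke triviality of the tame fundamental group of $\mathbf{A}^1_{\overline{\K}}$ to force a ramified \emph{finite} place of $\K(a_{n-1})$, and (iii) invokes the trichotomy of Proposition \ref{keyprop} to conclude that the ramified place must fall into the ``one double root'' case, because the bad-point relations have been absorbed into $\K$. This is precisely where $n>\#\{m_P>2\}$ is used: you need $n-1$ coordinates to swallow the bad points. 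Your explanation --- that $n-1$ free parameters suffice to avoid the $\le n-1$ bad hyperplanes --- cannot be the right role for this hypothesis, since avoiding finitely many hyperplanes in a space of dimension $\ge 1$ never needs a count. Similarly, you invoke $p>\max N_i$ to ``exclude wild ramification'' at a tangency, but a tangency has ramification index $2$, for which only $p\ne 2$ is relevant; the actual role of $p>\max N_i$ in the paper is to guarantee the Galois closure is prime-to-$p$ so that the extension must ramify somewhere finite at all. Your plan also does not verify that the tangency locus $\Delta_i'$ really is a component of the branch locus along which the vanishing is simple (so that inertia is exactly $\Z/2$ and not polluted by wild contributions), which in characteristic $p$ requires care that your sketch elides.

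Finally, the assembly step has a concrete error: you propose to finish by Goursat plus Propositions \ref{propsign} and \ref{52}, but those propositions are proved only for $F$ monic in $x$ with $n\ge\sl F$, whereas Theorem \ref{maingalois} is precisely the non-monic case; the discriminant-independence statement is not available here. In fact you do not need it: your own tangency construction produces a $\sigma\in G$ that transposes two roots of $F_i(t,\f)$ and \emph{fixes} all other roots of $F(t,\f)$, which already shows the sign map $G\to\{\pm 1\}^m$ is onto, and then Lemma \ref{lem1} finishes. That is exactly how the paper concludes; dropping Goursat/discriminants in favor of that observation repairs this gap.
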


The proof of Theorem \ref{maingalois} will be given in Section \ref{secnm}.
\\ \\
{\bf Remark.} Theorems \ref{main_galois} and \ref{maingalois} hold in fact for any algebraically closed field $k$ and not just $k=\fpb$, provided the required conditions on the characteristic are satisfied or the characteristic is 0. This is because for any fixed
$m,n,\deg F_i$ the assertion can be formulated in the first-order language of fields and all the algebraically closed fields with a given characteristic
are elementarily equivalent for this language. Furthermore if a statement in the first-order language of fields holds in algebraically closed fields of
arbitrarily large characteristic it must also hold in characteristic 0. See \cite[\S 3.2]{marker}.

\section{Computing the Galois group - an outline}\label{secoutline}

In the present section we outline the proof of Theorem \ref{main_galois}, from which all our other results follow (see Section \ref{secequidist}). We will show that under the assumptions of Theorem \ref{main_galois} for each $F_i$
the Galois group $G_i$ of each $F_i(t,\f)$ over $k(\a)$ is $S_{N_i}$. Furthermore we will show that the permutation sign map $G\to\{\pm 1\}^m$ is onto. This will
finish the proof of Theorem \ref{main_galois} by the following elementary lemma on permutation groups.

\begin{lem}\label{lem1} Let $G\ss S_{N_1}\times...\times S_{N_m}$ be such that the projections $G\to S_{N_i}$ are onto and the permutation sign map 
$$G\to\prod_{i=1}^mS_{N_i}/A_{N_i}\cong\{\pm 1\}^m$$ is also onto.
Then $G=S_{N_1}\times...\times S_{N_m}$.\end{lem}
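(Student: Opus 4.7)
The plan is to proceed by induction on $m$. For $m=1$ the hypothesis that $G$ surjects onto $S_{N_1}$ is already the conclusion. For $m\ge 2$, let $G'\subseteq\prod_{i<m}S_{N_i}$ denote the image of $G$ under the projection forgetting the $m$-th coordinate. The conditions of the lemma are inherited by $G'$: each coordinate projection $G'\to S_{N_i}$ for $i<m$ is still surjective, and for any $\epsilon'\in\{\pm 1\}^{m-1}$, surjectivity of the sign map on $G$ produces some $g\in G$ whose sign vector has $\epsilon'$ in its first $m-1$ components, so the projection of $g$ to $G'$ realizes $\epsilon'$. By the induction hypothesis $G'=\prod_{i<m}S_{N_i}$, and the problem reduces to showing that the two-factor subdirect product $G\le G'\times S_{N_m}$ is the full product.

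To this two-factor situation I apply Goursat's lemma: there exist normal subgroups $H\triangleleft G'$ and $M\triangleleft S_{N_m}$ and an isomorphism $\phi\colon G'/H\cong S_{N_m}/M$ with $G=\{(g',g_m):\phi(g'H)=g_m M\}$, and the desired conclusion $G=G'\times S_{N_m}$ is equivalent to $M=S_{N_m}$. Suppose for contradiction that $M\subsetneq S_{N_m}$ and set $Q:=S_{N_m}/M\ne 1$. If $Q$ is abelian, then since $S_{N_m}$ has abelianization $\Z/2$, one has $Q\cong\Z/2$; the pulled-back surjective character $G'\to\Z/2$ factors through the abelianization $\prod_{i<m}\{\pm 1\}$ and is therefore of the form $g'\mapsto\prod_{i\in S}\mathrm{sgn}(\pi_i(g'))$ for some nonempty $S\subseteq\{1,\dots,m-1\}$. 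This forces every $g\in G$ to satisfy the sign relation $\mathrm{sgn}(\pi_m(g))=\epsilon\prod_{i\in S}\mathrm{sgn}(\pi_i(g))$ for a fixed $\epsilon\in\{\pm 1\}$, cutting the image of the sign map $G\to\{\pm 1\}^m$ down to size at most $2^{m-1}$ and contradicting its surjectivity. If $Q$ is non-abelian, then for $N_m\ge 5$ the only option is $M=\{e\}$, $Q=S_{N_m}$, so that $[Q,Q]=A_{N_m}$ is a simple non-abelian quotient of $[G',G']=\prod_{i<m}A_{N_i}$; by the classification of normal subgroups of a product of non-abelian simple groups, this surjection must factor through projection to a single coordinate $i$ with $N_i=N_m$, and because every automorphism of $S_{N_m}$ preserves the sign character, the sign at coordinate $m$ again becomes determined by that at coordinate $i$, contradicting sign surjectivity.

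The main obstacle lies in the small cases $N_m\in\{3,4\}$, where $S_{N_m}$ admits additional proper normal subgroups beyond $\{e\}, A_{N_m}, S_{N_m}$ (most notably $V_4\triangleleft S_4$, yielding the extra non-abelian quotient $S_4/V_4\cong S_3$). These are disposed of by finite inspection: for each such $M$ one checks that the corresponding $Q$ still imposes a nontrivial relation on the sign coordinates of $G$ through the Goursat isomorphism $\phi$, either a character-type relation when the induced map $G'\twoheadrightarrow Q$ has abelian image, or a one-coordinate relation when it factors through a single $\pi_i$ whose $N_i$ shares the relevant simple composition factor with $N_m$. In every case sign surjectivity is violated, so $M=S_{N_m}$ is forced and $G=G'\times S_{N_m}=\prod_{i=1}^m S_{N_i}$.
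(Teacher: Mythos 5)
Your inductive Goursat strategy is sound and does prove the lemma, but the case analysis on the quotient $Q=S_{N_m}/M$ is both heavier than necessary and, in the small cases you defer to ``finite inspection,'' not actually tight as stated. In particular, for $N_m=4$, $M=V_4$, $Q\cong S_3$ is non-abelian but $[Q,Q]\cong\Z/3$ is abelian, so the commuting-normal-subgroups argument you invoke for non-abelian $Q$ does \emph{not} force the surjection $G'\twoheadrightarrow Q$ to factor through a single coordinate (e.g.\ if two of the $N_i$ equal $3$, the map on derived subgroups can be the sum $A_3\times A_3\to\Z/3$). So the dichotomy ``character-type relation or one-coordinate relation'' you propose for the inspection does not cover that subcase as phrased; the conclusion you want is still true, but your stated mechanism for it isn't.

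The whole case split (abelian vs.\ non-abelian $Q$, large vs.\ small $N_m$, appeal to the structure of automorphisms of $S_{N_m}$) can be replaced by one observation that handles every $M\subsetneq S_{N_m}$ uniformly. For $N\ge 2$, $A_N$ is the unique maximal proper normal subgroup of $S_N$: any normal subgroup containing an odd permutation generates, together with $A_N$, all of $S_N$, and since the normal subgroups of $S_N$ form a chain through $A_N$ (true even for $N=4$, where the chain is $\{e\}\subset V_4\subset A_4\subset S_4$), every proper normal $M$ lies inside $A_N$. Hence the sign character $S_{N_m}\to\{\pm1\}$ factors through $Q=S_{N_m}/M$ as a \emph{surjective} character $\chi\colon Q\to\{\pm1\}$. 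Pulling back through your Goursat isomorphism $\phi\colon G'/H\cong Q$ and then through $G'\twoheadrightarrow G'/H$ gives a nontrivial character $\widetilde\chi$ on $G'=\prod_{i<m}S_{N_i}$, which necessarily has the form $\widetilde\chi(g')=\prod_{i\in S}\mathrm{sgn}(\pi_i(g'))$ for some nonempty $S\subseteq\{1,\dots,m-1\}$. The Goursat membership condition $\phi(g'H)=g_mM$ then yields $\mathrm{sgn}(\pi_m(g))=\widetilde\chi(g')$ for all $g\in G$, which pins the $m$-th sign coordinate and contradicts surjectivity of $G\to\{\pm1\}^m$. This single argument replaces both your abelian and non-abelian branches and disposes of the small $N_m$ cases with no inspection. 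The paper itself gives no proof (it cites Lemma 3.2 of Bary-Soroker's ``Irreducible values of polynomials''), so I cannot compare routes with the source, but I'd recommend you replace the case analysis with the $M\subseteq A_{N_m}$ observation to make the write-up airtight.
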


\begin{proof} See \cite[Lemma 3.2]{constant}.\end{proof}

In Section \ref{sectrans} we will prove that each $G_i$ is $(n+1)$-transitive (as a permutation group on the roots of $\F_i(t,\f)$ (Proposition \ref{trans}). Since we always assume $n\ge 3$ this implies that each $G_i$ is 4-transitive. While the result is stated
over $\fpb$, our proof will use the arithmetic significance of the Galois group, namely we will use Theorem \ref{equidist} in this step as well.

In Section \ref{secsign} we will show that the sign map $G\to\{\pm 1\}^m$ is onto. We call this the sign-independence property. In particular $G_i\not\ss A_{N_i}$ for each $i$. Next we will use the following deep fact from group theory (see \cite[Theorem 4.11]{cameron} or \cite[\S 7.3]{dixon}):

\begin{thm}\label{thmgroup} Let $G\ss S_N$ be a $4$-transitive permutation group not contained in $A_N$. Then $G=S_N$.\end{thm}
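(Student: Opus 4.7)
The plan is to derive Theorem \ref{thmgroup} from the classification of finite $4$-transitive permutation groups, which itself is a consequence of the classification of finite simple groups (CFSG). Specifically, I would cite the following theorem: every $4$-transitive subgroup $G \subseteq S_N$ is one of (i) $S_N$ itself, (ii) the alternating group $A_N$ (which is $(N-2)$-transitive for $N\ge 4$), or (iii) one of the Mathieu groups $M_{11}, M_{23}$ (sharply $4$-transitive and $4$-transitive respectively) or $M_{12}, M_{24}$ (which are even $5$-transitive), in their natural permutation representations. This classification is set out, for example, in Cameron's book or in Dixon--Mortimer.

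Granted this list, the hypothesis $G \not\subseteq A_N$ immediately eliminates case (ii). To eliminate case (iii), I would use the fact that each Mathieu group $M$ is a finite nonabelian simple group. The sign homomorphism $\mathrm{sgn} : S_N \to \{\pm 1\}$ restricts to a homomorphism on $M$ whose image has order $1$ or $2$. If the image had order $2$, its kernel would be a proper nontrivial normal subgroup of $M$, contradicting simplicity. Hence the restriction is trivial, so each Mathieu group is contained in $A_N$. This rules out case (iii) under the hypothesis $G\not\subseteq A_N$, leaving only $G = S_N$.

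The main obstacle in this plan is of course the classification of $4$-transitive groups: it is genuinely deep, resting on CFSG, and no essentially elementary proof is known. One could try to weaken the conclusion and obtain $G=S_N$ or $A_N$ from much weaker hypotheses — for instance, via Jordan's classical theorems on primitive groups containing a cycle of small support, or via Wielandt-type results on highly transitive groups with a regular subgroup — but to drop all extra hypotheses beyond $4$-transitivity seems to require the classification. For the intended application in the present paper, the combination of $4$-transitivity (established via Theorem \ref{equidist} in Section \ref{sectrans}) with the sign-independence property (established in Section \ref{secsign}) is precisely tailored to feed Theorem \ref{thmgroup} and thus extract $G_i = S_{N_i}$ without any further work on the permutation-group side.
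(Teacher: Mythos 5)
Your proposal is correct and is essentially identical to the paper's approach: the paper likewise cites the CFSG-based classification of $4$-transitive groups (from Cameron or Dixon--Mortimer) and observes that the only possibilities besides $S_N$ and $A_N$ are the Mathieu groups, which being simple must lie in $A_N$. You merely spell out, via the sign homomorphism and simplicity, the one-line reason why the Mathieu groups are contained in $A_N$, a detail the paper leaves implicit.
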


In fact it is known that except for $S_N$ and $A_N$ the only 4-transitive groups are simple Mathieu groups (which then must be contained in $A_N$). The proof of this fact requires the classification of finite simple groups, more precisely the Schreier conjecture (that the outer automorphism group of each finite simple group is solvable) which follows from it.

Theorem \ref{thmgroup} combined with the 4-transitivity of the $G_i$ and the fact that $G_i\not\ss A_{N_i}$ implies that $G_i=S_{N_i}$. Combined with the sign-independence property of $G$ and Lemma \ref{lem1} this shows that $G=S_{N_1}\times...\times S_{N_m}$ as asserted in Theorem \ref{main_galois}.

{\bf Remark.} By much more elementary means it can be shown that a $\ceil{3\sqrt{N}-2}$-transitive group $G\ss S_N$ is either $S_N$ or $A_N$, see \cite[\S 5.7]{hall}. Therefore if in addition to the assumptions of Theorem \ref{main} we assume $n\ge 3\deg_x F_i,1\le i\le m$, our results can be proved without using the classification of finite simple groups.

\section{Computing the Galois group - multiple transitivity}\label{sectrans}

In the present section we will prove the $(n+1)$-transitivity property for the Galois groups of the individual $F_i(t,\f)$ asserted after the statement of Lemma \ref{lem1}. Let $p$ be a prime number. Let $F_1\in\fpb[t][x],\deg_x F_1>0$ be irreducible and separable over $\fpb(t)$. In the present section we do not assume that $F_1$ is monic in $x$, nor that $p$ is odd. Neither do we require any conditions on $n$. Proposition \ref{trans} below is valid in this generality.

Let $n$ be a natural number, $\a=(a_0,\ldots,a_n)$ free variables and $f=\sum_{j=0}^na_it^i\in\fpb[\a][t]$. Denote $N=N_1=\deg_tF_1(t,\f)$.
Let $\al_1,\ldots,\al_N$ be the roots of $F_1(t,\f)$ in the algebraic closure of $\fpb(\a)$. By Lemma \ref{lemsep} they are distinct. Let 
$G=G(\fpb(\a,\al_1,\ldots,\al_n)/\fpb(\a))$ be the Galois group of $F_1(t,\f)$. We view $G$ as a permutation group on $\al_1,\ldots,\al_N$.

In the present section we prove the following transitivity property:

\begin{prop}\label{trans} The action of $G$ on $\al_1,\ldots,\al_N$ is $(n+1)$-transitive, i.e. every sequence of $n+1$ distinct roots of $F_1(t,\f)$ can be mapped to any other such sequence by some element of $G$.\end{prop}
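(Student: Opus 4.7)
The plan is to apply Theorem \ref{equidist} with a judiciously chosen conjugacy-invariant class function, then compare the resulting equidistribution count with a direct count obtained via Lagrange interpolation on the plane curve $C:=\{F_1=0\}$.

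First I would reduce to a finite-field setup: pick a power $q$ of $p$ large enough that $F_1\in\F_q[t][x]$ and that the splitting field of $F_1(t,\f)$ over $\F_q(\a)$ picks up no new constants, so that the Frobenius coset $G_1$ of Theorem \ref{equidist} coincides with the geometric Galois group $G$ of Proposition \ref{trans} (as a permutation group on the roots $\al_1,\ldots,\al_N$). Let $K$ denote the number of $G$-orbits on ordered $(n+1)$-tuples of distinct roots; the goal is $K=1$. Set $\phi(\sig):=F(\sig)(F(\sig)-1)\cdots(F(\sig)-n)$, where $F(\sig)$ is the number of fixed points of $\sig$ acting on the $N$ roots. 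Then Burnside's lemma gives $\frac{1}{|G|}\sum_{\sig\in G}\phi(\sig)=K$, while for any separable $f\in\F_q[t]$ of degree $n$ one has $\phi(\Th(F_1(t,f)))=N_{n+1}(f)$, where $N_{n+1}(f)$ is the number of ordered $(n+1)$-tuples of distinct $\F_q$-rational roots of $F_1(t,f)$.

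Next I would compute $S:=\sum_{\deg f=n}N_{n+1}(f)$ in two ways. On the one hand, Theorem \ref{equidist} applied to $\phi$ (absorbing the $O(q^n)$ non-separable $f$'s via Lemma \ref{lemsep}) yields $S=K\cdot q^{n+1}(1+O(q^{-1/2}))$. On the other hand, any ordered tuple $((t_0,x_0),\ldots,(t_n,x_n))\in C(\F_q)^{n+1}$ with pairwise distinct $t_i$ is interpolated by a unique polynomial $f\in\F_q[t]$ of degree $\le n$ with $f(t_i)=x_i$, and the interpolant has degree exactly $n$ outside a codimension-one locus. This essentially inverts the map $f\mapsto((t_i,f(t_i)))_i$, whence $S=|C(\F_q)|^{n+1}+O(q^n)$. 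Since $F_1$, being irreducible over $\fpb(t)$ and (after dividing by its content) primitive, is by Gauss's lemma absolutely irreducible in $\fpb[t,x]$, the affine curve $C$ is geometrically irreducible, so the Weil bound for curves gives $|C(\F_q)|=q+O(q^{1/2})$ and hence $S=q^{n+1}(1+O(q^{-1/2}))$. Comparing the two formulas for $S$ forces $K=1+O(q^{-1/2})$, and since $K$ is a positive integer independent of $q$, taking $q$ sufficiently large yields $K=1$, which is precisely the $(n+1)$-transitivity asserted in Proposition \ref{trans}.

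\textbf{Main obstacle.} The most delicate step is the bookkeeping of the lower-order terms in the Lagrange-interpolation count---interpolants of degree strictly less than $n$, tuples with coinciding $t$-coordinates, singular or non-rational points of $C$, the non-separable exceptional $f$'s of Lemma \ref{lemsep}, and the matter of ensuring $G_1=G$ by a suitable choice of $\F_q$. All of these must be absorbed into the $O(q^{-1/2})$ error uniformly enough to force $K=1$; once that is done, the comparison of the two counts yields the result essentially formally.
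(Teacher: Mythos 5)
Your proposal is essentially the paper's own proof, with the same ingredients in the same order: reduce to a finite-field model where the geometric Galois group equals the arithmetic one, double-count ordered $(n+1)$-tuples of $\F_q$-rational roots via Lagrange interpolation and point-counting on the curve, compare with the average of the falling-factorial class function $\ell_{n+1}$ over $G$ supplied by Theorem~\ref{equidist}, then use integrality of the orbit count (Burnside) together with freedom to enlarge $q$ to pin that average down to exactly $1$. The only cosmetic difference is that you apply the Weil bound to $C$ itself and raise to the $(n+1)$-st power, whereas the paper invokes Lang--Weil on the open locus of $C^{n+1}$ with distinct $t$-coordinates directly; these are interchangeable.
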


Although the proposition is formulated over an algebraically closed field and one might expect a purely algebraic or algebro-geometric proof for it, our
approach is actually to use its arithmetic significance implied by Theorem \ref{equidist}.

Denote by $L$ the smallest extension of $\F_p(\a)$ containing the coefficients of $F_1$ and $\al_1,\ldots,\al_N$. It is finitely generated over $\F_p$ and so the
algebraic closure of $\F_p$ in $L$ is a finite field $\F_q$. Replacing $q$ by a large enough power and replacing $L$ by $L\F_q$ we may assume that 
$\Gal(L/\F_q(\a))=\Gal(L\fqb/\fqb(\a))=G$. The field $L$ is the splitting field of $F_1(t,\f)$ over $\F_q(\a)$ and by our assumptions $\F_q$ is algebraically closed in $L$. These properties persist if we replace $q$ by any power of it. This will be used later.

In the present section we continue using the convention of section \ref{secequidist} that the asymptotic $O$-notation has an implied constant depending on $n,\deg F_1$.
For a polynomial $g\in\F_q[t]$ and a natural number $e$ we denote by $\ell_e(g)$ the number of length-$e$ sequences of distinct roots of $g$ in the coefficient field $\F_q$.
We have of course $\ell_e(g)=\prod_{i=0}^{e-1}(\ell_1(g)-i)$. For a permutation $\sig\in S_N$ we will also denote by $\ell_e(\sig)$ the number of length-$e$ sequences of distinct fixed points of $\sig$. This is well defined on conjugacy classes in $S_N$.
Observe that for a separable $g$ with $\deg g=N$ we have 
$\ell_e(g)=\ell_e(\Th(g))$ where $\Th(g)$ is the Frobenius class of $g$. In the present section the Frobenius classes are defined via the action of $\Fr_q$
(not $\Fr_p$).

\begin{prop}\label{keycount} We have $$\sum_{f\in\F_q[t]\atop{\deg f=n}}\ell_{n+1}(F_1(t,f))=q^{n+1}\lb 1+O(q^{-1/2})\rb.$$\end{prop}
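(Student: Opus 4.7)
The plan is to rewrite the sum as the $\F_q$-point count of an open subvariety of the $(n+1)$-fold product of the affine plane curve $C:F_1(t,x)=0$, and then apply the Lang-Weil bound to $C$. A root $\beta\in\F_q$ of $F_1(t,f)$ is precisely an element with $(\beta,f(\beta))\in C(\F_q)$, so the sum counts tuples $(f,\beta_0,\ldots,\beta_n)$ with $\deg f=n$, the $\beta_i$ pairwise distinct elements of $\F_q$, and $(\beta_i,f(\beta_i))\in C(\F_q)$ for each $i$. For fixed pairwise distinct $\beta_i$, Lagrange interpolation provides a bijection between polynomials $f\in\F_q[t]$ of degree $\le n$ and value-tuples $(y_0,\ldots,y_n)\in\F_q^{n+1}$ via $y_i=f(\beta_i)$; the incidence $(\beta_i,f(\beta_i))\in C$ becomes $F_1(\beta_i,y_i)=0$, and the condition $\deg f=n$ translates into the non-vanishing of the Lagrange leading coefficient $\Lambda = \sum_i y_i\big/\prod_{j\ne i}(\beta_i-\beta_j)$. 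Thus the sum equals $|U'(\F_q)|$, where $U'\subset C^{n+1}$ is the open complement of the pairwise diagonals $\{\beta_i=\beta_j\}$ and the hypersurface $\{\Lambda=0\}$.

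Since $F_1$ is irreducible in $\fpb[t,x]$ (we may assume this after absorbing any $t$-content of $F_1$ into a unit of $\fpb(t)[x]$), $C$ is geometrically irreducible of dimension one, and the Lang-Weil/Hasse-Weil bound gives $|C(\F_q)|=q+O(q^{1/2})$ with implicit constant depending only on $\deg F_1$. Raising to the $(n+1)$-st power, $|C^{n+1}(\F_q)|=q^{n+1}(1+O(q^{-1/2}))$. Each of the removed closed subsets is a proper subvariety of $C^{n+1}$ of dimension $\le n$: a pairwise diagonal $\{\beta_i=\beta_j\}\cap C^{n+1}$ has dimension $n$ because the projection $C\to\mathbb{A}^1$ to the $\beta$-coordinate is generically finite, and $\{\Lambda=0\}$ is a hypersurface of dimension $\le n$ provided $\Lambda$ does not vanish identically on $C^{n+1}$. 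Both contribute only $O(q^n)$ $\F_q$-points by Lang-Weil applied componentwise, yielding $|U'(\F_q)|=q^{n+1}(1+O(q^{-1/2}))$ as desired.

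The one nontrivial point is showing that $\Lambda$ is not identically zero on $C^{n+1}$, equivalently that the Lagrange interpolation map $C^{n+1}\to\mathbb{A}^{n+1}$ sending a tuple of points to the coefficient vector of the interpolating polynomial of degree $\le n$ is dominant. This dominance holds precisely when $N=\deg_tF_1(t,\f)\ge n+1$: in that regime, a generic degree-$n$ polynomial $f$ makes $F_1(t,f)$ a polynomial of degree $N\ge n+1$ with $N$ distinct geometric roots (by the separability statement in Lemma \ref{lemsep}), so the interpolation map is generically finite and hence dominant. In the remaining case $N<n+1$, the polynomial $F_1(t,f)$ has fewer than $n+1$ roots for every $f$, so $\ell_{n+1}(F_1(t,f))$ vanishes identically, both sides of the asserted formula are zero (up to the implicit error term), and Proposition \ref{trans} is in any case vacuously true; so one may restrict attention to $N\ge n+1$ throughout.
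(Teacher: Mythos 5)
Your rewriting of $\sum_{\deg f=n}\ell_{n+1}(F_1(t,f))$ as the point count of $U'=V\setminus\{\Lambda=0\}\subset C^{n+1}$ is correct and takes a genuinely different route from the paper's. The paper double-counts the sum over $\deg f\le n$, which is exactly $\#V(\F_q)$, and then discards the $\deg f<n$ terms by the termwise bound $\ell_{n+1}(F_1(t,f))=O(1)$; you instead encode the constraint $\deg f=n$ geometrically through the non-vanishing of the leading Lagrange coefficient $\Lambda$, which obliges you to check $\Lambda\not\equiv 0$ on $V$. Your dominance argument for the regime $N\ge n+1$ (a generic $f$ makes $F_1(t,f)$ separable of degree $N\ge n+1$, so the interpolation map has a nonempty generic fibre) is sound. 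You get an exact identity for the $\deg f=n$ sum, where the paper settles for the cruder $O(q^n)$ reduction, at the price of the extra nondegeneracy verification.

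Your treatment of the residual case $N<n+1$, however, contains a genuine error, and it in fact reveals that the proposition as stated is false there. You write that both sides of the asserted formula are ``zero (up to the implicit error term),'' but the right-hand side is $q^{n+1}(1+O(q^{-1/2}))$, which is certainly not $o(q^{n+1})$. The case $N<n+1$ forces $N=n$ and $F_1=c(x-\rho)$ with $c\in\fpb^\times$ a constant and $\rho\in\fpb[t]$ of degree $\le n$. If $\deg\rho<n$ (for instance $F_1=x$), then $F_1(t,f)=c(f-\rho)$ is a nonzero polynomial of degree $n$ for every $f$ of degree $n$, so the left-hand side vanishes identically while the right-hand side is $\sim q^{n+1}$: the proposition fails. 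If instead $\deg\rho=n$, the single polynomial $f=\rho$ gives $F_1(t,f)=0$ and contributes $\ell_{n+1}(0)=q(q-1)\cdots(q-n)\sim q^{n+1}$, so the statement does hold there, but not because ``$\ell_{n+1}(F_1(t,f))$ vanishes identically'' --- that claim is false too. Note this is not a defect peculiar to your proof: the paper's own reduction from $\deg f\le n$ to $\deg f=n$ tacitly uses $\ell_{n+1}(F_1(t,f))=O(1)$, which fails for the one $f$ with $F_1(t,f)=0$. The correct repair for both proofs is to add the hypothesis $N\ge n+1$ and dispose of the excluded case ($F_1$ associate to $x-\rho$, $\deg\rho<n$) separately; under that hypothesis your argument is complete.
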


\begin{proof} 
Denote by $C$ the affine plane curve defined by $F_1(t,x)=0$. It is absolutely irreducible since $F_1$ is absolutely irreducible. 
Let $C^{n+1}$ be the $n+1$-fold product of $C$ with itself and $V\ss C^{n+1}$ the open subset of $(n+1)$-tuples of points with distinct $t$-coordinates,
$X=C^{n+1}\sm V$ its closed complement. The variety $C^{n+1}$ is irreducible and defined by equations of degree $O(1)$. The proper subvariety $X$ is also defined by equations of degree $O(1)$. Therefore by the Lang-Weil estimates we have $\#V(\F_q)=q^{n+1}\lb 1+O(q^{-1/2})\rb$.
For every sequence of points $(\tau_i,\xi_i)\in C(\F_q),i=1,\ldots,n+1$ with distinct $\tau_i$ there is a unique polynomial $f\in\F_q[t],\deg f\le n$ such that
$f(\tau_i)=\xi_i,1\le i\le n+1.$

Now by the definition of $\ell_e$ we have

\begin{multline}\label{doublecount}\sum_{f\in\F_q[t]\atop{\deg f\le n}}\ell_{n+1}(F_1(t,f))=\\=
\sum_{\tau_1,\ldots,\tau_{n+1}\in\F_q\atop{\mathrm{distinct}}}\#\{f\in\F_q[t],\deg f\le n,F_1(\tau_i,f(\tau_i))=0\}=\\=
\sum_{(\tau_1,\xi_1),\ldots,(\tau_{n+1},\xi_{n+1})\in C(\F_q)\atop{\tau_i\,\mathrm{distinct}}}\#\{f\in\F_q[t],\deg f\le n,f(\tau_i)=\xi_i\}=
\\=\#V(\F_q)=q^{n+1}\lb 1+O(q^{-1/2})\rb. \end{multline}
Since the number of $f$ with $\deg f<n$ is $q^n$ we may replace the condition $\deg f\le n$ in the summation with $\deg f=n$, introducing an error of
$O(q^n)$.
\end{proof}
   
Recall that $G=\Gal(L/\F_q(\a))\ss S_N$ and $\F_q$ is algebraically closed in $L$. For a random variable $X$ on a finite probability space $S$ we will denote
by $\av{X(s)}_{s\in S}$ its expected value. We always assume the probability measure to be uniform on the space.

\begin{prop}\label{transcrit} We have $$\av{\ell_{n+1}(\sig)}_{\sig\in G}=1.$$\end{prop}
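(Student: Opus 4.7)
The plan is to match two different expressions for the sum $\sum_{\deg f = n}\ell_{n+1}(F_1(t,f))$: the geometric/combinatorial one given by Proposition \ref{keycount}, and the group-theoretic one coming from equidistribution (Theorem \ref{equidist}). Both expressions are of the form $q^{n+1}$ times a main term plus error, and the main terms must agree. The group-theoretic main term is exactly $\av{\ell_{n+1}(\sigma)}_{\sigma\in G}$, so this forces it to equal $1$.

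More precisely, by Lemma \ref{lemsep}, outside an exceptional set of $f\in\F_q[t]$ of degree $n$ of size $O(q^n)$, the polynomial $F_1(t,f)$ is separable of degree $N$, so $\Th(F_1(t,f))$ is a well-defined conjugacy class in $S_N$ and $\ell_{n+1}(F_1(t,f))=\ell_{n+1}(\Th(F_1(t,f)))$. Since $\ell_{n+1}(\sigma)\le N^{n+1}=O(1)$ uniformly on $S_N$, the exceptional $f$ contribute only $O(q^n)$. By construction $\F_q$ is algebraically closed in $L$, so in the notation of Theorem \ref{equidist} we have $\nu=1$ and $G_1=G$. Summing the output of Theorem \ref{equidist} against the class function $\ell_{n+1}$ over the conjugacy classes of $S_N$ (a uniformly bounded number of classes, each with $\ell_{n+1}$ of size $O(1)$) therefore yields
\[
\sum_{\deg f = n}\ell_{n+1}(F_1(t,f)) = q^{n+1}\,\av{\ell_{n+1}(\sigma)}_{\sigma\in G}\bigl(1+O(q^{-1/2})\bigr).
\]
Combined with Proposition \ref{keycount}, which gives the same sum as $q^{n+1}(1+O(q^{-1/2}))$, division yields $\av{\ell_{n+1}(\sigma)}_{\sigma\in G}=1+O(q^{-1/2})$. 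The left-hand side is a fixed rational number depending only on the abstract permutation group $G$, and by the paragraph preceding Proposition \ref{keycount} we may replace $q$ by any power $q^k$ without changing $G$ (since $\F_q$ being algebraically closed in $L$ propagates to $\F_{q^k}$ being algebraically closed in $L\F_{q^k}$, with the same Galois group). Letting $k\to\infty$ kills the error and gives the claimed exact equality.

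There is no real obstacle in this plan: it is a straightforward ``coupling'' argument that reads off a group-theoretic invariant by matching an arithmetic count (Proposition \ref{keycount}) against its Chebotarev expansion (Theorem \ref{equidist}). The only point requiring a moment's care is verifying that the invariance of $G$ under $q\mapsto q^k$ is legitimate, which is exactly why the second paragraph of this section was arranged to ensure $\F_q$ is algebraically closed in $L$ to begin with.
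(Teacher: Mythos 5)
Your proposal is correct and follows essentially the same route as the paper: match Proposition \ref{keycount} against the equidistribution supplied by Theorem \ref{equidist} (with $\nu=1$, so $G_1=G$) to get $\av{\ell_{n+1}(\sigma)}_{\sigma\in G}=1+O(q^{-1/2})$, then exploit the fact that the left side is a fixed rational number invariant under replacing $q$ by a power to upgrade this to exact equality.
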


\begin{proof} By Theorem \ref{equidist} the Frobenius elements of $F_1(t,f)$ for $f\in\F_q[t],\deg f=n$ are equidistributed in the $S_N$-conjugacy classes
of $G$ up to $O(q^{-1/2})$ (note that $\nu=1$ in the notation of Theorem \ref{equidist} since $\F_q$ is algebraically closed in $L$). Using Proposition \ref{keycount}
we see that \begin{multline*}\av{\ell_{n+1}(\sig)}_{\sig\in G}=\av{\ell_{n+1}(\Th(F_1(t,f)))}_{f\in\F_q[t],\deg f=n}+O(q^{-1/2})=\\=1+O(q^{-1/2})
\end{multline*}
(we may disregard those $f$ with $F_1(t,f)$ non-separable by Lemma \ref{lemsep}).
The implicit constant in the error term depends only on $\deg F,n$ and not on $q$.
We have observed in the beginning of the section that $q$ may be replaced by any power of $q$ with all of our assumptions remaining valid. Since $\av{\ell_e(\sig)}_{\sig\in G}$ is a rational number with denominator
dividing $N!$, replacing $q$ with a large enough power of it we see that we must have an equality $\av{\ell_{n+1}(\sig)}_{\sig\in G}=1$.
\end{proof}

To complete the proof of Proposition \ref{trans} we need the following elementary lemma from group theory.

\begin{lem}\label{lemtrans} Let $G$ be a finite group acting on a finite set $X$. For $\sig\in G$ denote by $\ell_e(\sig)$ the number of length-$e$ sequences of distinct fixed points for the action of $G$ on $X$. 
Then $$\av{\ell_e(\sig)}_{\sig\in G}\ge 1,$$ with equality iff $G$ is $e$-transitive.\end{lem}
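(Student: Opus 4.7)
The plan is to establish the stronger identity
$$\av{\ell_e(\sig)}_{\sig\in G} = \#\{G\text{-orbits on length-}e\text{ sequences of distinct elements of } X\},$$
from which the lemma follows at once: assuming $\#X\ge e$ (otherwise the statement is vacuous since $\ell_e$ vanishes identically), the right-hand side is at least $1$, with equality precisely when $G$ acts transitively on length-$e$ sequences of distinct elements of $X$, which is the definition of $e$-transitivity.

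To prove the identity I would double-count the set
$$S = \{(\sig,(x_1,\ldots,x_e)) : \sig\in G,\ x_1,\ldots,x_e\in X \text{ distinct},\ \sig\cdot x_i = x_i \text{ for each } i\}.$$
Summing first over $\sig$ and using the definition of $\ell_e$ gives $\#S = \sum_{\sig\in G}\ell_e(\sig)$. Summing instead over the sequences gives $\#S = \sum \#\mathrm{Stab}_G(x_1,\ldots,x_e)$, where the outer sum ranges over length-$e$ sequences of distinct elements of $X$ and $\mathrm{Stab}_G$ denotes the pointwise stabilizer. I would then partition this latter sum according to the $G$-orbits on such sequences: by orbit-stabilizer, every orbit of size $m$ contributes exactly $m \cdot (\#G/m) = \#G$. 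Therefore $\#S = \#G$ times the number of orbits, and dividing through by $\#G$ yields the claimed identity.

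There is no substantive obstacle here; this is the standard Burnside / Cauchy-Frobenius counting trick. The only minor point to flag is the edge case $\#X<e$, where the set of length-$e$ sequences of distinct elements is empty, so both sides of the identity vanish and the lemma is vacuous under the natural convention that $e$-transitivity requires at least $e$ elements to permute.
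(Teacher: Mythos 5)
Your proof is correct and takes essentially the same approach as the paper: both are the Cauchy--Frobenius (Burnside) orbit-counting argument, double-counting pairs of a group element together with a fixed $e$-tuple of distinct points. The paper merely packages it as a base case $e=1$ followed by a reduction to $e=1$ via the diagonal action on $X^{(e)}$, whereas you run the count on $e$-tuples in one pass; the underlying computation is identical, and you additionally make explicit the tacit hypothesis $\#X\ge e$.
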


\begin{proof} First we prove the assertion for $e=1$. For $x\in X$ we denote by $O_x$ its orbit and by $G_x$ its stabilizer. We have
\begin{multline*}\frac{1}{\# G}\sum_{\sig\in G}\ell_1(\sig)=\frac{1}{\# G}\#\{(\sig,x)\in G\times X|\sig x=x\}=\\=\frac{1}{\#G}\sum_{x\in X}\# G_x=
\sum_{x\in X}\frac{1}{\#O_x}\ge 1,\end{multline*}
since $\#O_x\le \# X$ for all $x$. Equality holds iff $O_x=X$ for all $x\in X$, i.e. if the action of $G$ is transitive.

To prove the assertion for general $e$ consider the set $X^{(e)}$ of $e$-sequences of distinct elements in $X$ with the $G$-action defined by
$\sig(x_1,\ldots,x_e)=(\sig x_1,\ldots,\sig x_e)$. For $\sig\in G$ the number of fixed points for this action is exactly $\ell_e(\sig)$. The action of $G$ on 
$X^{(e)}$ is transitive iff the action of $G$ on $X$ is $e$-transitive. Now applying the case $e=1$ to the action of $G$ on $X^{(e)}$ we obtain our assertion.
\end{proof}

Combining Proposition \ref{transcrit} and Lemma \ref{lemtrans} we see that the action of $G$ on $\al_1,\ldots,\al_N$ is $(n+1)$-transitive, which finishes the proof of Proposition \ref{trans}.

\section{Computing the Galois group - sign independence}\label{secsign}

Let $p$ be a prime number. Denote $\k=\fpb$. Let $F_1,\ldots,F_m\in\k[t][x],\deg_x F_i=r_i>0$ be non-associate, irreducible and separable over $\k(t)$. 
Denote $F=\prod_{i=1}^m F_i$. Let $n$ be a natural number, $\a=(a_0,\ldots,a_n)$ free variables, $f=\sum_{j=1}^na_it^i\in\k[\a][t]$. Assume that $n\ge\sl F_i,1\le i\le m$. Denote $N_i=r_in=\deg_tF_i(t,\f)$ (the last equality follows from the assumption $n\ge\sl F_i$).
Let $L$ be the splitting field of $F(t,\f)$ over $k(\a)$, $G=\Gal(L/k(\a))$ its Galois group, which we view as a subgroup of $S_{N_1}\times...\times S_{N_m}$
via its action on the roots of $F_i(t,\f)$.

From Section \ref{secoutline} we know that to complete the proof of Theorem
\ref{main_galois} it is enough to show that the sign projection map $G\to\prod_{i=1}^mS_{N_i}/A_{N_i}\cong\{\pm 1\}^m$ is onto. In the case of odd $p$, by a well-known fact from Galois theory this is equivalent
to the discriminants $\Disc_t F_i(t,\f)$ being linearly independent as elements of $\k(\a)^\times/\k(\a)^{\times 2}$ (note that they are nonzero by Lemma
\ref{lemsep}). This fact follows from the expression of the discriminant of a polynomial $g=\sum_{i=0}^N b_it^i,b_N\neq 0$ over an arbitrary field $K$ as
$\Disc\,g=\Del^2$ where $$\Del=b_N^{N-1}\prod_{i<j}(\rho_i-\rho_j),$$ $\rho_i$ being the roots of $g$. The expression $\Del$ is fixed by even permutations but not by odd ones (in odd characteristic), so $\Del\in K$ iff the Galois group of $g$ over $K$ is contained in $A_N$ (this argument applies to a single polynomial, but easy to extend to the case of several $g_1,\ldots,g_k$). This is valid only in odd characteristic. A similar criterion can be formulated in characteristic 2 using Berlekamp discriminants \cite{berlekamp}.

\subsection{Sign independence: odd p}

Assume $p>2$. We will need the following basic facts about the discriminant (see \cite[\S 12]{gelfand}). For every natural number $N$ there is a universal polynomial $\D_N(b_0,\ldots,b_N)\in\Z[b_0,\ldots,b_N]$ such that
over any field $K$ and for any $g=\sum_{j=0}^NB_jt^j\in K[t],B_N\neq 0$ we have $$\Disc\,g=\D_N(B_0,\ldots,B_N).$$ Furthermore, if we assign
to each variable $b_j$ the weight $j$, the polynomial $\D_N(b_0,\ldots,b_N)$ is homogeneous of (weighted) degree $N(N-1)$.

For polynomials $g_1,g_2$ with nonzero discriminants we have $$\Disc\,g_1g_2\equiv\Disc\,g_1\Disc\,g_2\bmod K^{\times2}.$$
Therefore to show the multiplicative independence of the discriminants $\Disc_t F_i(t,\f)$ (modulo squares) it is enough to show that the discriminant of any partial
product of the $F_i$ is not a square. Without loss of generality we may assume that this partial product is $F=\prod_{i=1}^m F_i$ (otherwise repeat the argument
with a subset of the $F_i$). Note that since $\sl PQ\le\max(\sl P,\sl Q)$ we have $n\ge\sl F$. So it is enough to prove the following.

\begin{prop}\label{propsign} Let $F\in\k[t][x]$ with $\deg_x F=r>0$ be separable over $\k(t)$ and monic in $x$. Assume that $n\ge\sl F$ and as usual $\a=(a_0,\ldots,a_n)$ are free variables,
$\f=\sum_{j=0}^na_jt^j$. Then $\Disc_tF(t,\f)$ is not a square in $\k(\a)$.\end{prop}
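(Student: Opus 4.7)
The plan is to exhibit a point $\a^* \in \k^{n+1}$ at which $\Disc_t F(t, \f) \in \k[\a]$ has a zero of exact order $1$. Since $\k[\a]$ is a UFD and $\k^\times$ consists of squares, this forces an irreducible factor of $\Disc_t F(t, \f)$ to appear with multiplicity $1$ in its prime factorisation, so it cannot be a square in $\k(\a)$. Concretely, it suffices to produce $\a^*$ with $\Disc_t F(t, \f)(\a^*) = 0$ and some partial derivative of $\Disc_t F(t, \f)$ nonzero at $\a^*$.

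To construct such an $\a^*$, fix a smooth point $P_0 = (t_0, x_0)$ of the curve $C : F = 0$ with $F_x(P_0) \neq 0$; such points exist because monicity of $F$ in $x$ together with separability over $\k(t)$ forces $\gcd(F, F_x) = 1$ in $\k[t, x]$. Set $m_0 = -F_t(P_0)/F_x(P_0)$ and let $V \subset \k^{n+1}$ be the $(n-1)$-dimensional affine subspace cut out by the two linear conditions $\f(\a, t_0) = x_0$ and $\f'(\a, t_0) = m_0$. For every $\a \in V$, the specialisation $g_\a(t) := F(t, \f(\a, t))$ has $t_0$ as a root of multiplicity $\geq 2$. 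A dimension count (needing $n \geq 3$) then produces $\a^* \in V$ with $a_n^* \neq 0$, with $t_0$ an \emph{exact} double root of $g_{\a^*}$, and with all other roots simple: for each smooth $P_1 = (t_1, x_1) \in C(\k)$ with $t_1 \neq t_0$ and $F_x(P_1) \neq 0$, the locus of $\a \in V$ forcing an additional double root at $t_1$ is cut out by two further linear conditions, $\f(t_1) = x_1$ and $\f'(t_1) = -F_t(P_1)/F_x(P_1)$, whose independence from the defining conditions of $V$ is witnessed by the confluent Vandermonde identity $\det = (t_1 - t_0)^4$ (valid over $\Z$). Each such fibre has dimension $n-3$; varying $P_1$ over the $1$-dimensional $C$ gives a bad locus of dimension $\leq n - 2 < \dim V$, and analogous bounds dispatch the loci where $a_n = 0$, where $t_0$ is a root of multiplicity $\geq 3$, or where $P_1$ is exceptional.

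Finally, a first-order perturbation along $\dot\a = (1, 0, \ldots, 0)$, giving $\f(\a^* + \epsilon\dot\a, t) = \f^*(t) + \epsilon$, shows $\Disc_t F(t, \f)$ is smooth at $\a^*$. Taylor expanding $F(t, \f^*(t) + \epsilon)$ at $(t, \epsilon) = (t_0, 0)$ using $g_{\a^*}(t_0) = g_{\a^*}'(t_0) = 0$, $g_{\a^*}''(t_0) \neq 0$, and $F_x(t_0, x_0) \neq 0$ yields
$$\tfrac{1}{2} g_{\a^*}''(t_0)(t - t_0)^2 + \epsilon F_x(t_0, x_0) + (\text{higher order}) = 0.$$
The double root of $g_{\a^*}$ therefore splits into $t_0 \pm c\sqrt{\epsilon} + O(\epsilon)$ with $c \neq 0$ (this is where $p > 2$ is essential), while the remaining roots deform smoothly. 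In the product formula $\Disc(g) = \mathrm{lc}(g)^{2N - 2} \prod_{i < j}(\beta_i - \beta_j)^2$, only the pair coming from the split double root contributes a vanishing factor, $(\beta_1 - \beta_2)^2 \sim -8\epsilon F_x(t_0, x_0)/g_{\a^*}''(t_0)$, so $\Disc_t F(t, \f)(\a^* + \epsilon\dot\a) = \kappa\epsilon + O(\epsilon^2)$ with $\kappa \neq 0$, whence $\partial_{a_0}\Disc_t F(t, \f)(\a^*) \neq 0$.

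The main technical hurdle is the dimension count in the middle paragraph — precisely the need to rule out that imposing tangency at $P_0$ automatically forces a second tangency somewhere on $C$ — for which the confluent Vandermonde identity and the bound $n \geq 3$ are decisive. The hypotheses $n \geq \sl F$ and monicity enter only to pin down the top-degree behaviour ($\deg_t g_\a = nr$ and $\gcd(F, F_x) = 1$), and $p > 2$ is required both to keep $\f'$ non-degenerate as a linear functional on $\a$ and for the square-root splitting of the double root to be meaningful.
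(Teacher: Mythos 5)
Your proof is genuinely different from the paper's. The paper assigns weights $w(a_j)=j$, observes that the top weighted-degree form of $\Disc_tF(t,\f)$ equals $\Disc_tF(0,\f)$, and thereby reduces to the constant-coefficient case, which it disposes of by citing \cite[Proposition 1.7]{constant}. Your argument instead works geometrically: you locate a smooth point of the discriminant hypersurface in $\a$-space by producing a specialisation with a single nondegenerate double root and then perturbing. Your route avoids the external citation and is more self-contained; the weighted-degree route is shorter and, more importantly, does not impose $n\ge 3$.

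That last point is the one genuine gap. Proposition~\ref{propsign} as stated assumes only $n\ge\sl F$, not $n\ge 3$, and this extra generality is actually used: Theorem~\ref{chowla} for odd $q$ is asserted without the $n\ge 3$ hypothesis, and the paper derives it from Proposition~\ref{propsign}. Your dimension count --- $V$ of dimension $n-1$, each ``extra tangency'' contributing a locus of dimension $n-3$ which after sweeping over $C$ has dimension $n-2<\dim V$ --- needs at least four independent interpolation conditions, hence $n\ge 3$. For $n=1$ or $n=2$ the space $V$ is too small for the count to go through as stated, so your argument does not recover the full claim. The paper's reduction to the constant-coefficient case sidesteps this because the weighted-degree bookkeeping works for every $n\ge\sl F$.

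Two smaller remarks. First, the treatment of ``exceptional'' points is a bit compressed: singular points of $C$ and points with $F_x=0$ and $F_t\ne 0$ need a sentence each (the latter simply never produce double roots of $g_\a$, and the former each impose one extra linear condition, so still land in codimension $\ge 1$ inside $V$). Second, the parenthetical claim that $p>2$ is needed to keep $\f'$ nondegenerate as a linear functional is not right --- the $2\times 2$ minor on $a_0,a_1$ is $\begin{pmatrix}1&t_0\\0&1\end{pmatrix}$, which is unimodular in every characteristic; the real use of $p>2$ is exactly where you put it, in the square-root splitting of the double root, and one should add that the discriminant product formula itself must be handled via Hasse--Schmidt derivatives if one wanted to push to $p=2$ (the paper does this separately in Proposition~\ref{52}).

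So: a correct and appealing alternative proof for $n\ge 3$, $p>2$, which is the regime needed for Theorems~\ref{main} and~\ref{main_dec}; but it does not, as written, prove the proposition in the generality claimed, and hence does not feed Theorem~\ref{chowla} for odd $q$ and small $n$.
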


\begin{proof} We may assume without loss of generality that $F(0,x)$ is separable. Otherwise find an $\al\in\k$ such that $F(\al,x)$ is separable and replace
$t$ with $t-\al$ (such an $\al$ exists because $\Disc_xF(t,x)\neq 0$ since $F$ is separable over $\k(t)$). This does not change the discriminant of $F(t,\f)$. By Lemma \ref{lemsep} $F(0,\f)$ is separable over $k(\a)$.

Let us assign weights to the variables $a_j$ by $w(a_j)=j$ (and to monomials by additivity). For a polynomial $H\in\k[\a]$ we will denote by 
$\deg_wH$ the highest weight of a monomial appearing in it. The fact that $F$ is monic in $x$ and $n\ge\sl F$ implies that $N=\deg_t F(t,\f)=rn$. Write
$$F(t,\f)=\sum_{j=0}^N C_j(\a)t^j,C_j\in\k[\a],$$
$$F(0,\f)=\sum_{j=0}^N D_j(\a)t^j,D_j\in\k[\a].$$ 
We have $\deg_w C_j\le j$. Moreover, the degree $j$ form of each $C_j$ w.r.t. $w$ is exactly $D_j$ (since a polynomial of the form $t^\mu\f^\nu$ has a coefficient of weight $j-\nu$ at $t^j$), which is homogeneous of degree $j$ by construction. We have
$\deg_t F(t,\f)=\deg_t F(0,\f)=rn=N$. It follows from the homogeneity of the discriminant with weight $j$ for the coefficient of $t^j$ and the fact that $\Disc_t F(0,\f)\neq 0$ that the degree $N(N-1)$ form of $\Disc_tF(t,\f)$ is exactly $\Disc_t F(0,\f)$, which is homogeneous of degree $N(N-1)$.
This is the leading (highest weight) form of $\Disc_tF(t,\f)$. It is therefore enough to show that $\Disc_t F(0,\f)$ is not a square.
But this is just a special case of the proposition for a polynomial with constant coefficients (i.e. independent of $t$) and this has been proved in \cite[Proposition 1.7]{constant} for odd $q$.
\end{proof}

\subsection{Sign independence: $p=2$}

Assume $p=2$. We recall the definition and basic facts about the Berlekamp discriminant. See \cite{berlekamp} and \cite{carmon} for more details. Let $N$ be a natural number and $b_0,\ldots,b_N$ free variables. In the case of even characteristic the discriminant $\D_N(b_0,\ldots,b_N)\in\F_2[b_0\ldots,\b_N]$ (reduced modulo 2) is in fact the square of a polynomial $\del_N(b_0,\ldots,b_N)\in\F_2[b_0,\ldots,b_N]$. For a field $K\supset\F_2$ and a polynomial $g=\sum_{j=0}^NB_jt^j\in K[t],B_N\neq 0$ we will denote $\del(g)=\del_N(B_0,\ldots,B_N)$. If $\rho_1,\ldots,\rho_N$ are the roots of $g$ in an algebraic closure of $K$ the \emph{Berlekamp discriminant} of $g$ is defined to be
$$\BDisc(g)=\sum_{1\le i<j\le N}\frac{\rho_i\rho_j}{\rho_i^2+\rho_j^2}.$$ It can be written as $$\BDisc(g)=\frac{\xi(B_0,\ldots,B_N)}{\del(g)^2},$$ where $\xi(b_0\ldots,b_N)\in\F_2(b_0,\ldots,b_N)$ is a universal polynomial depending on $N$. We will denote $\xi(g)=\xi_N(g)$ when $\deg g=N$. If we assign the weights $w(b_i)=i$ to the variables $b_0,\ldots,b_N$ then $\del_N$ is homogeneous of degree N(N-1)/2 and $\xi_N$ is homogeneous of degree $N(N-1)$.

A fundamental property of the Berlekamp discriminant is that the Galois group of $g$ over $K$ contains an odd permutation of the roots of $g$ iff there exists a $\tau\in K$ such that $\BDisc(g)=\tau^2+\tau$. We now need to prove the following analogue of Proposition 5.1 for even characteristic:

\begin{prop}\label{52} Let $F\in k[t][x]$ with $\deg_x F=r>0$ be separable over $k(t)$ and monic in $x$. Assume that $n\ge\max(3,\sl F)$ and as usual $\a=(a_0,\ldots,a_n)$ are free variables, $\f=\sum_{j=0}^na_jt^j$. Then $\BDisc_t F(t,\f)$ is not of the form $\tau^2+\tau$ for $\tau\in k(\a)$.\end{prop}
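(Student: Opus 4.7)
The plan is to mirror the odd-characteristic argument of Proposition \ref{propsign}: I will first reduce to the case where $F(0,x)$ is separable in $x$, then use the weighted grading $w(a_j)=j$ to pass to leading forms, and finally invoke the constant-coefficient analogue of the statement, which is the characteristic-$2$ counterpart of \cite[Proposition 1.7]{constant} established by Carmon in \cite{carmon}.

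For the first reduction, choose $\alpha\in k$ with $F(\alpha,x)$ separable in $x$ and substitute $t\mapsto t-\alpha$. A direct calculation shows that translating the roots $\rho_i\mapsto\rho_i+c$ of $F(t,\f)$ alters each summand $\rho_i\rho_j/(\rho_i^2+\rho_j^2)$ of the Berlekamp discriminant by $(c/(\rho_i+\rho_j))^2+c/(\rho_i+\rho_j)$ in characteristic~$2$. Summing over pairs, and using that $\sum_{i<j}c/(\rho_i+\rho_j)$ is symmetric in the roots (hence lies in $k(\a)$), the net change in $\BDisc_t F(t,\f)$ is of the form $\tau^2+\tau$ with $\tau\in k(\a)$. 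Thus the property in question is invariant under such a shift, and by Lemma~\ref{lemsep} the polynomial $F(0,\f)\in k[\a][t]$ is then separable.

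Next, extending the weight grading $w(a_j)=j$ from $k[\a]$ to $k(\a)$ by $w(p/q)=\deg_w p-\deg_w q$, I would write $F(t,\f)=\sum_j C_j(\a)t^j$. As in the proof of Proposition~\ref{propsign}, the hypotheses $n\ge\sl F$ and the monicity of $F$ in $x$ give $\deg_w C_j\le j$, with weight-$j$ component equal to $D_j$, where $F(0,\f)=\sum_j D_j t^j$. By the weighted-homogeneity of the universal polynomials $\delta_N$ and $\xi_N$ (degrees $N(N-1)/2$ and $N(N-1)$, respectively) together with the separability of $F(0,\f)$, the leading (top-weight) forms of $\delta(F(t,\f))$ and $\xi(F(t,\f))$ are $\delta(F(0,\f))$ (nonzero, of weight $N(N-1)/2$) and $\xi(F(0,\f))$ respectively. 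Consequently, provided $\BDisc_t F(0,\f)\ne 0$, the top-weight component of $\BDisc_t F(t,\f)=\xi(F(t,\f))/\delta(F(t,\f))^2$ is the weight-$0$ homogeneous rational function $\BDisc_t F(0,\f)$.

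Now suppose for contradiction that $\BDisc_t F(t,\f)=\tau^2+\tau$ for some $\tau\in k(\a)$, and let $d=w(\tau)$. A straightforward weight computation gives $w(\tau^2+\tau)=\max(2d,d)$, except in the degenerate case where $d=0$ and the weight-$0$ leading form $\tau_0$ of $\tau$ lies in $\F_2$, so that $\tau_0^2+\tau_0=0$ and $w(\tau^2+\tau)<0$. Matching weights with the left side, which equals $0$ whenever $\BDisc_t F(0,\f)\ne 0$, forces $d=0$ and, passing to weight-$0$ leading components, yields $\BDisc_t F(0,\f)=\tau_0^2+\tau_0$ with $\tau_0\in k(\a)$; the degenerate possibility is ruled out by the weight mismatch, and the case $\BDisc_t F(0,\f)=0$ satisfies the same conclusion trivially with $\tau_0=0$. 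In every case we are reduced to showing that $\BDisc_t F(0,\f)\in k(\a)$ is not of Artin--Schreier form $\sigma^2+\sigma$, which is the characteristic-$2$ analogue of \cite[Proposition 1.7]{constant} established by Carmon in \cite{carmon}, delivering the required contradiction. The main technical subtlety I anticipate is precisely the potential cancellation in the leading part of $\tau^2+\tau$ when $\tau_0\in\F_2$, handled by the case analysis above; beyond this, the argument is a direct transposition of the odd-characteristic proof to the additive Artin--Schreier setting.
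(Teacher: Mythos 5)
Your proof is correct and follows the same core strategy as the paper: assign the weight $w(a_j)=j$, pass to leading forms to reduce to the constant-coefficient polynomial $F(0,\f)$, and invoke Carmon's result (\cite[Lemma 6.3]{carmon}) for that case. The one technical difference in route is that the paper clears denominators, writing $\tau=u/\del(F(t,\f))$ and taking leading forms of the polynomial identity $\xi(F(t,\f))=u^2+\del(F(t,\f))u$, whereas you extend $w$ to a valuation on $k(\a)$ and compare leading forms of the rational functions $\BDisc_t F(t,\f)$ and $\tau^2+\tau$ directly; both work. You are more explicit than the paper about two points it leaves implicit. First, you justify the initial shift $t\mapsto t-\alpha$ by computing that it changes $\BDisc$ by the Artin--Schreier element $\sigma^2+\sigma$ with $\sigma=\sum_{i<j}\alpha/(\rho_i+\rho_j)\in k(\a)$ (this verification is needed here, unlike the odd-$p$ case where $\Disc$ is literally shift-invariant; the paper only says "as we may after a shift"). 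Second, you address the potential cancellation in $\lf(\tau^2+\tau)$ when $\tau_0\in\F_2$, and the possibility $\BDisc(F(0,\f))=0$; in the paper's setup these correspond to the possibilities $\lf u=\del(F(0,\f))$ or $\xi(F(0,\f))=0$, both of which would force $\deg_w\xi(F(t,\f))<N(N-1)$ and are thus silently excluded once one also observes that $\BDisc(F(0,\f))=0$ is itself (trivially) Artin--Schreier and so contradicts Carmon. So the two arguments are the same in substance; yours is slightly more careful about the degenerate edge cases.
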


\begin{proof} Assume to the contrary that there exists $\tau\in k(\a)$ such that $$\BDisc_t F(t,\f)=\tau^2+\tau.$$ Since $\BDisc_t F(t,\f)=\xi(F(t,\f))/\del(F(t,\f))^2$ and $k(\a)$ is a unique factorisation domain, we can write $\tau=u/\del(F(t,\f)),u\in k(\a)$, and we have \beq\label{4}\xi(F(t,\f))=u^2+\del(F(t,\f))u.\eeq

Now let us assign the weights $w(a_i)=i$ to the variables and denote by $\lf H$ the leading form of a polynomial $H\in k(\a)$ with respect to this weight. By the homogeneity properties of $\del_N,\xi_N$ and the fact that $$\deg F(t,\f)=\deg F(0,\f)=n\deg_x F$$ (since $n\ge\sl F$) we have that (denoting $N=n\deg_x F$)
$$\lf\del(F(t,\f))=\del(F(0,\f)),\lf\xi(F(t,\f))=\xi(F(0,\f)),$$ $$\deg_w\del(F(t,\f))=\deg_w\del(F(0,\f))=N(N-1)/2,$$ $$\deg_w\xi(F(t,\f))=\deg_w\xi(F(0,\f))=N(N-1),$$ and from \rf{4} we also
have $\deg_wu=N(N-1)/2$. Using these facts and taking leading forms in \rf{4} we deduce that $$\xi(F(0,\f))=(\lf u)^2+\del(F(0,\f))\cdot\lf u,$$
so taking $\tau_1=\lf u/\del(F(0,\f))$ we have $$\BDisc(F(0,\f))=\tau_1^2+\tau_1.$$
Now assuming, as we may after a shift in the variable $t$, that $F(0,\f)\in k(\a)[t]$ is separable, we have reduced our problem to the constant coefficient case (i.e. the case when $F(t,x)$ is independent of $t$). But this is a special case of \cite[Lemma 6.3]{carmon}.\end{proof}

\section{Proof of Theorem \ref{thmnm}}\label{secnm}

The proof of Theorem \ref{maingalois}, from which Theorem \ref{thmnm} follows, will occupy the present section as well as the next one.
Let $p$ be a prime number. Denote $\k=\fpb$. Let $F_1,\ldots,F_m\in\k[t][x],\deg_x F_i>0$ be non-associate, irreducible and separable over $\k(t)$. Denote
$F=\prod_{i=1}^mF_i$. Let $n\ge 3$ be a natural number. At this point we impose no further restrictions on the $F_i,n$ or $p$. They will be required later. Let $\a=(a_0,\ldots,a_n)$ be free variables over $\k$,
$$\f=\sum_{j=0}^na_it^i\in\k[\a][t], N_i=\deg_t F_i(t,\f).$$ By Lemma \ref{lemsep} the polynomial $F(t,\f)\in\k[\a][t]$ is separable over $\k(\a)$. Let $L$ be the splitting field
of $F(t,\f)$ over $\k(\a)$, $G=\Gal(L/\k(\a))$ its Galois group. We view $G$ as a subgroup of $S_{N_1}\times...\times S_{N_m}$ via its action on the roots
of each $F_i(t,\f)$. Our aim is to show that in fact $G=S_{N_1}\times...\times S_{N_m}$ under the assumptions of Theorem \ref{maingalois}.
Denote by $G_i\ss S_{N_i}$ the Galois group of $F_i(t,\f)$ over $\k(\a)$. This is the projection of $G$ to $S_{N_i}$ defined by restricting its action
to the roots of $F_i(t,\f)$.

By Proposition \ref{trans} each $G_i$ acts $2$-transitively (in fact $(n+1)$-transitively) on the roots of $F_i(t,\f)$.
Suppose that we could show that for each $i=1,\ldots,m$ there exists an element $\sig\in G$ which transposes two roots of $F_i(t,\f)$ and leaves all the other roots
of $F(t,\f)$ fixed. Then $G_i=S_{N_i}$, since $G_i$ is 2-transitive and $S_{N_i}$ is generated by transpositions. Furthermore the sign projection map
$G\to\prod_{i=1}^m S_{N_i}/A_{N_i}\cong\{\pm 1\}^m$ is onto. Lemma \ref{lem1} would then imply that $G=\prod_{i=1}^mS_{N_i}$.

It is therefore sufficient to prove the existence of transpositions as above. Our plan is to construct a discrete valuation ring in $k(\a)$ which ramifies in $L$ such that its inertia group contains the required transposition. An important ingredient in the proof is the following technical claim, the main idea behind its proof suggested to the author by U. Zannier.

\begin{prop}\label{keyprop} Let $H\in \k[\a]\sm\k[a_n]$ be an irreducible polynomial. Let $K$ be the algebraic closure of the field of fractions of
$\k[\a]/H$. Let $b_i$ be the image of $a_i$ in $K$, $g=\sum_{j=0}^m b_it^i\in K[t]$. Then one of the following holds for $F(t,g)\in K[t]$:
\begin{enumerate}
\item $F(t,g)$ is separable, i.e. has only simple roots in $K$.
\item $F(t,g)$ has one root of multiplicity two and the other roots are simple.
\item There exists a point $(\tau,\xi)$ on the affine plane curve $C$ defined by $F(t,x)=0$ with multiplicity $m_P$ such that $H\sim \f(\tau)-\xi\in\k[\a]$ $($by $\sim$ we denote association$)$, $\tau$ is a root of $F(t,g)$ of exact multiplicity $m_P$ and the other roots of $F(t,g)$ are simple.
\end{enumerate}
\end{prop}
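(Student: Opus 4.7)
The plan is to analyze the irreducible factorization of the discriminant $D(\a) = \Disc_t F(t,\f) \in \k[\a]$ and match each irreducible factor to one of the three scenarios in the proposition. By construction $F(t,g) \in K[t]$ fails to be separable if and only if $D(\a)$ reduces to zero in $\k[\a]/H$, i.e.\ iff $H \mid D$. If $H \nmid D$ we obtain case (1), so henceforth assume $H \mid D$.

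To classify the possibilities for $H$, I set up the incidence variety
\[\mathcal{X} = \{(\a,\tau) \in \mathbb{A}^{n+1} \times \mathbb{A}^1 : F(\tau, \f(\tau)) = 0\},\]
which is an $\mathbb{A}^n$-bundle over $C$ via $(\a,\tau) \mapsto (\tau, \f(\tau))$, together with its ramification locus
\[\mathcal{R} = \{(\a,\tau) \in \mathcal{X} : F_t(\tau, \f(\tau)) + F_x(\tau, \f(\tau))\,\f'(\tau) = 0\}.\]
The image of $\mathcal{R}$ under projection to $\mathbb{A}^{n+1}$ is precisely $\{D=0\}$, and I claim its irreducible components fall into two families: (i) for each singular point $P=(\tau_P,\xi_P) \in C(\k)$ (where $F_t(P)=F_x(P)=0$ automatically), the component $\{(\a,\tau_P) : \f(\tau_P) = \xi_P\}$ projects isomorphically onto the hyperplane $L_P = \{\f(\tau_P) - \xi_P = 0\}$; and (ii) for each absolutely irreducible component $C_j$ of $C$, one "tangency" component lying over the smooth locus $C_j^{\mathrm{sm}}$, projecting to an irreducible divisor $\Delta_j \subset \mathbb{A}^{n+1}$. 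A further contribution to $D$ coming from leading-coefficient degeneration lies in $\k[a_n]$, and this is the reason for the hypothesis $H \notin \k[a_n]$. Hence $H$ must be proportional to some $L_P$ or some $\Delta_j$.

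Suppose first $H \sim L_P$ for a singular $P=(\tau_P,\xi_P)$ of multiplicity $m_P$. Taylor-expanding $F$ around $P$,
\[F(\tau_P+s,\xi_P+u) = Q_{m_P}(s,u) + (\text{terms of total degree} > m_P),\]
with $Q_{m_P}$ a nonzero homogeneous form of degree $m_P$, and substituting $u = g(\tau_P+s) - \xi_P = g'(\tau_P)\,s + O(s^2)$ yields
\[F(\tau_P+s,\,g(\tau_P+s)) = Q_{m_P}(1, g'(\tau_P))\,s^{m_P} + O(s^{m_P+1}).\]
Now $g'(\tau_P) = \sum i b_i \tau_P^{i-1}$ is the residue mod $H$ of the affine form $\f'(\tau_P) \in \k[\a]$. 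Because $H \sim \f(\tau_P) - \xi_P$ and the linear forms $\f(\tau_P) - \xi_P$ and $\f'(\tau_P) - c$ (for any $c \in \k$) are $\k$-linearly independent (compare the coefficients of $a_0$ and $a_1$), no $c \in \k$ satisfies $\f'(\tau_P) \equiv c \pmod H$. Since $\k = \fpb$ is algebraically closed in $K_0 := \mathrm{Frac}(\k[\a]/H)$, this forces $g'(\tau_P)$ to be transcendental over $\k$, whence $Q_{m_P}(1, g'(\tau_P)) \neq 0$, and $\tau_P$ is a root of $F(t,g)$ of exact multiplicity $m_P$. The other roots are simple because the generic point of $L_P$ lies on no other irreducible component of $\{D=0\}$. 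When $H \sim \Delta_j$ instead, an analogous second-order Taylor analysis at the generic smooth tangency point on $C_j$ (where $m_P=1$ but the first-order condition $F_t + F_x g' = 0$ holds generically) yields a single double root, giving case (2).

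The main obstacle is the classification step — verifying that $\mathcal{R}$ contains no irreducible components beyond the $L_P$ and the $\Delta_j$ (modulo the excluded $\k[a_n]$-factor), and that the generic tangency on $\Delta_j$ has multiplicity exactly two rather than three or higher. Both reduce to a dimension count on the $\mathbb{A}^n$-bundle $\mathcal{X} \to C$ combined with a second-order Taylor analysis at the generic tangency point, and use the separability of $F$ over $\k(t)$ to ensure that no irreducible component of $C$ has everywhere-vanishing partial derivatives.
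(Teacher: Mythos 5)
Your approach is genuinely different from the paper's. The paper's proof (Section 7) factors $F(t,x)=c(t)\prod_{i=1}^d(x-\phi_i)$ over a finite separable extension $E$ of $\k(t)$, extends places $\pi_\al:E\to K\cup\{\ity\}$ so that $\phi_i(\al)$ makes sense, and then rules out each bad multiplicity configuration of $F(t,g)$ by a direct transcendence-degree count on $b_0,\ldots,b_n$: a triple root would make $b_0,b_1,b_2$ algebraic over $\k(b_3,\ldots,b_n,\al)$ via the first and second Hasse--Schmidt derivatives; two distinct double roots $\al,\be$ would make $b_0,\ldots,b_3$ algebraic over $\k(b_4,\ldots,b_n,\al,\be)$ via an explicit $4\times 4$ Vandermonde-type determinant $-(\al-\be)^4$, and this is exactly where $n\ge 3$ is used. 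Your route instead recasts everything in terms of the incidence variety $\mathcal{X}\to C$ and the discriminant divisor $\{D=0\}\subset\mathbb{A}^{n+1}$, aiming to classify its components as the hyperplanes $L_P$ (singular $P$), the tangency divisors $\Delta_j$, and a degree-drop factor in $\k[a_n]$. This is a more geometric packaging of the same underlying dimension counts, and it is a legitimate alternative in spirit.

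The problem is that the hard content of Proposition \ref{keyprop} lives exactly in the steps you flag as ``the main obstacle'' and then leave unproved. Concretely: (a) you do not establish that $\mathcal{R}$ has pure dimension $n$ and that its components are precisely the $L_P$ for singular $P$ and one $\Delta_j$ per absolutely irreducible component $C_j$; this is where separability of $F$ over $\k(t)$ (to rule out $F_x$ vanishing on a component of $C$), the absence of vertical components ($\deg_x F_i>0$), and the linear independence of $\f(\tau),\f'(\tau)$ as affine forms in $\a$ all need to be deployed. (b) Your claim that ``the other roots are simple because the generic point of $L_P$ lies on no other irreducible component of $\{D=0\}$'' tacitly requires knowing $\Delta_j\ne L_P$ for every $j$ and $L_Q\ne L_P$ for $Q\ne P$; the second is obvious but the first ($\Delta_j$ not a hyperplane, or at least not that hyperplane) is asserted without argument. (c) Most importantly, establishing that the generic point of $\Delta_j$ carries a single double root --- no triple root and no second tangency on any $C_k$ --- is precisely the content of the paper's second Hasse--Schmidt derivative argument and the rank-$4$ Vandermonde determinant, and it is where the hypothesis $n\ge 3$ enters. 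Your proposal never invokes $n\ge 3$, which is a red flag: without that assumption the proposition is stated in the paper only for $n\ge 3$, and the two-double-roots case cannot be excluded by dimension count when $n\le 2$ (the four affine-linear conditions on $\a$ coming from two tangencies are not independent when only $a_0,\ldots,a_2$ are available). So while the incidence-variety framework is sound, the proof as written stops exactly where the work begins.
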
 

The proof of Proposition \ref{keyprop} will be given in Section \ref{seckeyprop}.
We now proceed to the proof of Theorem \ref{maingalois}.
Assume that $$n>\#\{P\in C|m_P>2\}.$$ Also assume that $p>\max N_i$. We have seen that it is enough to produce for each $1\le i\le m$ an element $\sig\in G$ which transposes two roots of $F_i(t,\f)$ and fixes the other roots of $F(t,\f)$. By symmetry it is enough to show this for $i=1$. Let $(\tau_l,\xi_l),l=0,\ldots,n-2$ be distinct points on $C$ including all the points $P$
such that $m_P>2$. Denote $$a_l'=\f(\tau_l)-\xi_l,$$ $$\K=\k(a_0',\ldots,a_{n-2}',a_n),$$ $\al$ a root of $F_1(t,\f)$ in $L$ (recall that $L$ is the splitting field
of $F(t,\f)$ over $\k(\a)$). The variables $a_0',\ldots,a_{n-2}',a_{n-1},a_n$ are obtained from $a_0,\ldots,a_n$ by an invertible affine transformation (it is invertible because of the nonvanishing of Vandermonde determinants with distinct second column entries). We have
$\k(\a)=\K(a_{n-1})$. The field $\k(\a,\al)=\K(a_{n-1},\al)$ can now be viewed as a one-variable function field over $\K$.

\begin{lem} The polynomial $F_1(t,\f)$ considered as an element of $\K[t,a_{n-1}]$ is irreducible over $\Kb$.\end{lem}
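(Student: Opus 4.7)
The plan is to exploit the fact that the change of variables makes $\f$ affine in $a_{n-1}$. Using Lagrange interpolation at $\tau_0,\ldots,\tau_{n-2}$ (together with the fixed leading coefficient $a_n$), I would write
\[ \f(t) = \Phi(t) + a_{n-1} M(t), \]
where $M(t) = \prod_{l=0}^{n-2}(t - \tau_l) \in \k[t]$ is monic of degree $n-1$ and $\Phi \in \K[t]$ satisfies $\Phi(\tau_l) = a_l' + \xi_l$. This turns the vanishing locus of $F_1(t,\f)$ in $\mathbb{A}^2_{\Kb}$ (with coordinates $a_{n-1},t$) into the scheme-theoretic preimage $\varphi^{-1}(C_1)$ under the morphism $\varphi : (a_{n-1},t)\mapsto(t,\Phi(t)+a_{n-1}M(t))$, where $C_1=\{F_1=0\}$ is base-changed to $\Kb$. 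Away from the zeros of $M$, $\varphi$ is an isomorphism with inverse $(t,x)\mapsto((x-\Phi(t))/M(t),t)$. Since $F_1$ is irreducible over the algebraically closed field $\k$, the curve $C_1$ is geometrically integral and hence remains integral over $\Kb$, so the restriction of $\varphi^{-1}(C_1)$ to $\{M\neq 0\}$ is an open subvariety of an integral curve and therefore irreducible.

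The main step will be to rule out any component of $V(F_1(t,\f))$ lying over a line $t=\tau_l$, where $\varphi$ collapses. On such a line $F_1(t,\f)$ reduces to the constant $F_1(\tau_l,a_l'+\xi_l)\in\K$; I claim this is nonzero. Indeed, $a_l'\in\K$ is transcendental over $\k$, and $F_1(\tau_l,y)\in\k[y]$ is a nonzero polynomial --- otherwise $(t-\tau_l)\mid F_1$, which together with the irreducibility of $F_1$ would force $F_1=c(t-\tau_l)$, contradicting $\deg_xF_1>0$. A transcendental element cannot satisfy a nonzero polynomial equation over $\k$, so the constant is nonzero. Hence $V(F_1(t,\f))$ is entirely contained in $\{M\neq 0\}$ and, by the previous paragraph, is irreducible as a variety. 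It follows that $F_1(t,\f)=c\,P^e$ in $\Kb[a_{n-1},t]$ for some unit $c$, an irreducible polynomial $P$, and some integer $e\ge 1$.

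To conclude, I would use separability to pin down $e=1$. By Lemma \ref{lemsep} (applied to $F_1$, a factor of the separable $F=\prod F_i$), the polynomial $F_1(t,\f)$ is separable as an element of $\k(\a)[t]$; separability of a polynomial depends only on its having distinct roots in an algebraic closure, so it persists over $\Kb(a_{n-1})$. Since $\deg_tF_1(t,\f)=N_1>0$, the irreducible $P$ must have positive $t$-degree, and (using Gauss's lemma to view $P$ as irreducible in $\Kb(a_{n-1})[t]$) the factor $P^e$ for $e\ge 2$ would fail to be squarefree in $t$. Therefore $e=1$ and $F_1(t,\f)$ is irreducible in $\Kb[a_{n-1},t]$, as claimed.
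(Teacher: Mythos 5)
Your proof is correct, but it takes a genuinely different (more geometric) route than the paper's.

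Both arguments rest on the same decomposition $\f = \Phi(t) + a_{n-1}M(t)$ with $M(t)=\prod_l(t-\tau_l)$. The paper reasons purely algebraically over the rational function field: it observes that $t,\f$ are algebraically independent over $\Kb$, so $F_1(t,\f)$ is (by absolute irreducibility of $F_1$ and Gauss's Lemma) irreducible in $\Kb(t)[\f]$, and then substitutes $a_{n-1}=(\f-\Phi)/M$ into a hypothetical nontrivial factorization $F_1(t,\f)=UV$ to obtain a contradiction. That argument rules out factorizations in which both $U$ and $V$ have positive $a_{n-1}$-degree, but it is silent about a potential nonconstant factor lying in $\Kb[t]$ (such a factor becomes a unit in $\Kb(t)[\f]$ after the substitution, so produces no contradiction there). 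Your geometric argument fills exactly this lacuna: you recast $V(F_1(t,\f))$ as $\varphi^{-1}(C_1)$ for $\varphi:(a_{n-1},t)\mapsto(t,\Phi+a_{n-1}M)$, show explicitly that $V(F_1(t,\f))$ avoids the exceptional lines $t=\tau_l$ (because $F_1(\tau_l,a_l'+\xi_l)$ is a nonzero polynomial over $\k$ evaluated at the transcendental $a_l'$), and then transport irreducibility of $C_1$ through the isomorphism off $\{M=0\}$. Your final step — passing from irreducibility of the variety to $F_1(t,\f)=cP^e$ and then invoking separability to force $e=1$ — is valid but can be shortened: since $\varphi$ is an isomorphism over $\{M\neq 0\}$ and $C_1$ is integral, the scheme $V(F_1(t,\f))$ is itself integral, which directly gives that $F_1(t,\f)$ is irreducible without any appeal to separability. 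On balance, your proof buys more explicitness and robustness (in particular it makes the handling of the locus $\{M=0\}$ visible), at the cost of a slightly longer argument than the paper's terse Gauss-Lemma computation.
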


\begin{proof} We may write \beq\label{fuv}\f=u(t)+a_{n-1}v(t),\deg_tu,\deg_tv\le n,\eeq where $u\in\K[t]$ has coefficients which are affine forms in $a_0',\ldots,a_{n-2}',a_n$ and $v(t)\in\k[t]$. We have $v\neq 0$ again by the nonvanishing of the Vandermonde determinant with second column entries $\tau_0,\ldots,\tau_{n-2},t$. By \rf{fuv} we have $$a_{n-1}=(\f-u(t))/v(t).$$ The elements $t,\f\in\K(a_{n-1},t)$ are algebraically independent over $\Kb$. Suppose that $F_1(t,\f)=U(t,a_{n-1})V(t,a_{n-1})$, where $U,V\in\Kb[t,a_{n-1}]$ are nonconstant. Then
$$F_1(t,\f)=U\lb t,\frac{\f-u(t)}{v(t)}\rb V\lb t,\frac{\f-u(t)}{v(t)}\rb.$$ But this is impossible since $F_1(t,\f)$ is irreducible in $\Kb[t,\f]$ and by
Gauss's Lemma also in $\Kb(t)[\f]$. We obtained a contradiction.\end{proof}

\begin{prop}\label{ram} There exists an irreducible polynomial $H\in\k[\a]$ which when viewed as a polynomial in $\K[a_{n-1}]$ is nonconstant and defines a place which is ramified in the extension
$$\k(\a)=\K(a_{n-1})\ss \K(a_{n-1},\al)=\k(\a,\al)$$ of one-variable function fields over $\K$ (recall that $\al$ is a root of $F_1(t,\f)$).\end{prop}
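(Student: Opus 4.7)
The plan is to produce a ramified finite place in the function field extension $\K(a_{n-1})\subset\K(a_{n-1},\al)$ via Riemann--Hurwitz over $\Kb$, and then descend the corresponding prime to an irreducible element of $k[\a]$ by Gauss's Lemma.

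The preceding lemma shows that $F_1(t,\f)$, the minimal polynomial of $\al$ over $\K(a_{n-1})$, is irreducible in $\Kb[t,a_{n-1}]$; since it has positive $t$-degree it is primitive in $\Kb[a_{n-1}][t]$, so by Gauss's Lemma it remains irreducible in $\Kb(a_{n-1})[t]$ and a fortiori in $\K(a_{n-1})[t]$. Combined with Lemma~\ref{lemsep}, this shows that $\K(a_{n-1})\subset\K(a_{n-1},\al)$ is a separable geometrically irreducible extension of function fields of degree $N_1$ over $\K$, and the associated smooth projective cover $\pi\colon X\to\mathbb{P}^1_{a_{n-1}}$ is geometrically connected. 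The hypothesis $p>\max_i N_i\ge N_1\ge n\ge 3$ forces $p\nmid e_P$ for every ramification index $e_P\le N_1$, so $\pi$ is tamely ramified everywhere.

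Applying Riemann--Hurwitz to the base-changed cover $\pi_{\Kb}\colon X_{\Kb}\to\mathbb{P}^1_{\Kb}$, on which all closed points have residue field $\Kb$, tameness yields
\[
\sum_{P\in X_{\Kb}}(e_P-1)=2g(X_{\Kb})-2+2N_1\ge 2N_1-2.
\]
The fundamental equality $\sum_{P\mid\infty}e_P=N_1$ bounds the contribution of the places above $a_{n-1}=\infty$ by $N_1-\#\{P\mid\infty\}\le N_1-1$, so the ramification above the finite points of $\mathbb{P}^1_{\Kb}$ contributes at least $(2N_1-2)-(N_1-1)=N_1-1\ge 2$. Thus some $a_{n-1}=\theta\in\Kb$ is ramified in $\pi_{\Kb}$; the place of $\K(a_{n-1})$ below it is defined by the irreducible, nonconstant minimal polynomial $H_0\in\K[a_{n-1}]$ of $\theta$, and remains ramified in $\K(a_{n-1})\subset\K(a_{n-1},\al)$ since ramification descends through the base change $\K\to\Kb$.

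To produce the claimed $H\in k[\a]$, I would clear denominators in the coefficients of $H_0$ (using $\K=\mathrm{Frac}\,k[a_0',\ldots,a_{n-2}',a_n]$) to obtain $\tilde H\in k[a_0',\ldots,a_{n-2}',a_n][a_{n-1}]=k[\a]$, and then divide out its content $c\in k[a_0',\ldots,a_{n-2}',a_n]$ to get a primitive polynomial $H\in k[\a]$ with $\tilde H=cH$. As $c\in\K^{\times}$, the polynomials $H$ and $H_0$ are $\K^{\times}$-associates in $\K[a_{n-1}]$, so $H$ is irreducible in $\K[a_{n-1}]$ and cuts out the same ramified place; Gauss's Lemma then transfers irreducibility to the UFD $k[\a]$, and $H$ is nonconstant in $a_{n-1}$ because $H_0$ is. The main obstacle is the Riemann--Hurwitz step, which crucially uses both geometric connectedness (furnished by the preceding lemma and Gauss's Lemma) and tameness (furnished by $p>\max_i N_i$); without tameness, wild ramification above $a_{n-1}=\infty$ could in principle absorb all of $\deg R$ and the argument for the existence of a finite ramified place would collapse.
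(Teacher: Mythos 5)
Your proof is correct but takes a genuinely different route from the paper. The paper passes to the Galois closure of $\K(a_{n-1},\al)/\K(a_{n-1})$ — whose degree divides $N_1!$ and is therefore prime to $p$ since $p>N_1$ — and then invokes the triviality of the tame fundamental group of the affine line over an algebraically closed field (citing SGA~1, Exp.~XIII, Cor.~2.12) to conclude that this nontrivial tame extension must ramify at some finite place. You instead apply Riemann--Hurwitz directly to the degree-$N_1$ cover $X_{\Kb}\to\mathbb{P}^1_{\Kb}$ without passing to the Galois closure: tameness (from $e_P\le N_1<p$) together with $g(X_{\Kb})\ge 0$ forces total ramification at least $2N_1-2$, while the places above $\infty$ can contribute at most $N_1-1$, leaving at least $N_1-1>0$ of ramification at finite places. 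Both arguments base-change to $\Kb$ (the geometric irreducibility from the preceding lemma is what makes this legitimate) and both hinge on the tameness supplied by $p>N_1$, which is indeed the only place in the paper where that hypothesis is used. Your Riemann--Hurwitz route is more self-contained and elementary — it replaces a citation to a deep theorem about $\pi_1^{\mathrm{tame}}(\mathbb{A}^1)$ with a two-line genus count, and it is essentially the standard proof of the special case of that theorem needed here. The descent step at the end (ramification of $a_{n-1}=\theta$ in $X_{\Kb}\to\mathbb{P}^1_{\Kb}$ implies ramification of the place cut out by the minimal polynomial $H_0$ of $\theta$ in $X\to\mathbb{P}^1_{\K}$) and the content-clearing to land $H$ in $k[\a]$ are both correct, though the paper glosses over them.
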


\begin{proof} By the previous lemma the extension $\K(a_{n-1})\ss \K(a_{n-1},\al)$ is geometric, i.e. $\K(a_{n-1})$ is algebraically closed in $\K(a_{n-1},\al)$.
We assumed that $p>N_1$, so the Galois closure of $\K(a_{n-1},\al)$ over $\K(a_{n-1})$ is a Galois extension of degree prime to $p$ and so must ramify at some finite place
of $\K(a_{n-1})$ (the tame fundamental group of the affine line is zero, see \cite[\S XIII, Corollary 2.12]{sga}), which can be defined by some irreducible polynomial $H\in\k[\a]=\k[a_0',\ldots,a_{n-2}',a_n][a_{n-1}]$. Of course $H$ ramifies in $\K(a_{n-1},\al)$ as well. This proves the proposition.\end{proof}

We remark that the proof of the Proposition \ref{ram} is the only place where we use the condition $p>\max N_i$. Now let $H$ be as asserted in the proposition. Note that since $H$ is nonconstant as a polynomial in $\K[a_{n-1}]$ we have $H\not\in\k[a_n]$ and also $H$ is not associate to a polynomial of the form $f(\tau)-\xi$ for any point $P=(\tau,\xi)\in C$ with $m_P>2$ (since these elements are in $\K$ by construction). The image of the polynomial
$F(t,\f)\in\k[\a][t]$ modulo $H$ has degree $N=\deg_tF(t,\f)$ (since the leading coefficient of $F(t,\f)$ is in $k[a_n]$ and so is prime to $H$) and by
Proposition \ref{keyprop} it has at most one double root over the algebraic closure of the fraction field of $\k[\a]/H$, the other roots being simple, and no root of multiplicity 3 or higher. 

Denote by $R$ the discrete valuation ring $\K[a_{n-1}]_H$ in $\K(a_{n-1})$ and by $S$ any discrete valuation ring lying over
it in $L$. The field $R/HR$ is isomorphic to the field of fractions of $\k[\a]/H$ as a $k[\a]$-module (recall that $\k[\a]=\K[a_{n-1}]$). Denote by $\eta$ a prime element of $S$.
Since $H$ is ramified in the extension $\K[a_{n-1}]=\k[\a]\ss L$, the inertia group of $S$ relative to $R$ is non-empty, so there exists a nontrivial $\sig\in G$
satisfying $\sig\al_j\equiv\al_j\pmod{\eta S}$ for any root $\al_j$ of $F(t,\f)$. By the previous paragraph there can be at most two roots $\al_1,\al_2$ of $F(t,\f)$ for which
$\al_1\equiv\al_2\pmod{\eta S}$ and they must be roots of $F_1(t,\f)$ since $R$ ramifies in the extension defined by $F_1(t,\f)$. Therefore $\sig$ transposes
$\al_1,\al_2$ and leaves the other roots of $F(t,\f)$ fixed, which is exactly what we needed to complete the proof of Theorem \ref{maingalois}.

\section{Proof of Proposition \ref{keyprop}}\label{seckeyprop}

We keep the setting and notation of the previous section, but we assume no restrictions on $n$ and $p$ other than $n\ge 3$.
Let $H\in \k[\a]\sm\k[a_n]$ be an irreducible polynomial. Let $K$ be the algebraic closure of the field of fractions of
$\k[\a]/H$. Let $b_i$ be the image of $a_i$ in $K$, $g=\sum_{j=0}^m b_it^i\in K[t]$.

We begin by noting that $H(b_0,\ldots,b_n)=0$ and up to a constant this is the only relation satisfied by $b_0,\ldots,b_n$. 
By our assumption that $H\not\in\k[a_n]$ we have $$\deg_t F_i(t,g)=N_i=\deg_t F_i(t,\f),$$ since the leading coefficient of $F_i(t,f)$ is in $\k[a_n]$ and so is prime to $H$.

Denote $d=\deg_x F$. Over a finite separable extension $E$ of $\k(t)$ we may factor 
$$F(t,x)=c(t)\prod_{i=1}^d (x-\ze_i/c(t)),$$
where $\ze_i\in E$ are integral over $\k[t]$ and $c(t)$ is the leading coefficient of $F$ as a polynomial in $x$. The $\ze_i$ are distinct since the $F_i$ are
distinct, irreducible and separable. We will denote $\phi_i=\xi_i/c(t)\in E$.

A \emph{place} on a field $\mathcal{F}$ with values in a field $\mathcal{E}$ is a map $\pi:\mathcal{F}\to\mathcal{E}\cup\{\ity\}$ such that
$\mathcal{R}=\pi^{-1}(\mathcal{E})$ is a valuation ring in $\mathcal{F}$ and $\pi|_\mathcal{R}$ is a ring homomorphism. See \cite[\S 9.7,9.8]{jacobson} for the definition and basic properties of valuation rings and places. The most important fact we will use is that if $\mathcal{E}$ is algebraically closed and $R\ss\mathcal{F}$ is any subring, then any homomorphism $\pi:R\to\mathcal{F}$ can be extended to an $\mathcal{E}$-valued place on $\mathcal{F}$. 
If $\pi$ is a place on $\mathcal{F}$ which is regular on $R$, i.e. does not assume $\ity$, then it is also regular on the integral closure of $R$ in $\mathcal{K}$.

Recall that $K$ is the algebraic closure of the field of fractions of $\k[\a]/H$. Let $\al\in K$ be any element. Evaluation at $\al$ defines a place $\pi_\al:\k(t)\to K\cup\{\ity\}$ which is regular (i.e. does not assume $\ity$) on $k[t]$.
For each $\al\in K$, $\pi_\al$ can be
extended to a place $E\to K\cup\{\ity\}$ which we will also denote by $\pi_\al$. We choose one such extension for each $\al\in K$. 
Since the $\ze_i$ are integral over $\k[t]$ we have $\pi_\al(\xi_i)\in K$. If $c(\al)\neq 0$ then $\pi_\al(\phi_i)\in K$.
For $h\in E$ we will use the notation $h(\al)=\pi_\al(h)$. For $h\in k(t)$ this coincides with the usual meaning of $h(\al)$.
Note that for $h\in E$ and any algebraically closed field $\k\ss K'\ss K$ we have $h(\al)\in K'\cup\{\ity\}$ whenever $\al\in K'$. This is because $h$ is
algebraic over $\k(t)$, so $h(\al)$ is algebraic over $\k(\al)$ (since $\pi_\al$ is a ring homomorphism on its valuation ring).

The usual derivative defines a derivation $D:\k(t)\to \k(t)$ over $\k$. Since $E/\k(t)$ is a finite separable extension, $D$ extends uniquely to a derivation
$D:E\to E$. For $h\in E$ we will denote $h'=Dh$. On $\k(t)$ this coincides with the usual definition of the derivative.
Similarly the usual derivative on $K(t)$ can be extended to $K(t)E=K(t)[x]/F(t,x)$ and this extension is coherent with the extension from $\k(t)$ to $E$ by uniqueness.

While we may assume $p>2$ for our application, we will prove Proposition \ref{keyprop} for $p=2$ as well, as this only requires a slight modification and the proposition might be useful in full generality. To accomodate the case $p=2$ we will need to use the second Hasse-Schmidt derivative (see \cite[\S 1.3]{goldschmidt} for background on this notion). For a polynomial $f=\sum_{j=0}^e u_jt^j\in\ell[t]$ over a field $\ell$ it is defined by
$$f^{(2)}=\sum_{j=2}^e u_j\lb\begin{array}{c}j\\2\end{array}\rb t^{j-2}.$$
The second Hasse-Schmidt derivative is $\ell$-linear and satisfies
\begin{equation}\label{prodrule}(f_1f_2)^{(2)}=f_1^{(2)}f_2+f_1'f_2'+f_1f_2^{(2)}.\end{equation} In characteristic $\neq 2$ we have $f^{(2)}=\frac{1}{2}f''$.
An element $\al\in\bar{\ell}$ is a triple (or higher multiplicity) root of $f$ iff $f(\al)=f'(\al)=f^{(2)}(\al)=0$.
Like the usual derivative, the second Hasse-Schmidt derivative has a unique extension to $\ell(t)$ and then to any finite separable extension of it.
We extend the second Hasse-Schmidt derivative from $\k(t)$ to $E$ and from $K(t)$ to $K(t)E$ (these are coherent extensions by uniqueness, i.e. the derivative on $K(t)E$ when restricted to $E$ coincides with the derivative on $E$).

After setting up the notions of evaluation of elements of $E$ (which we view as algebraic functions on $K$) on elements of $K$ and the notions of derivative
and second Hasse-Schmidt derivative for elements (algebraic functions) in $K(t)E$, we proceed to the proof of Proposition \ref{keyprop}.

First assume that there exists an element $\tau\in\k$ such that $F(\tau,g(\tau))=0$. Since $\k$ is algebraically closed we have $\xi=\g(\tau)\in\k$.
The point $(\tau,\xi)\in\mathbf{A}^2(k)$ lies on the curve $C$ defined by $F(t,x)=0$. We have $g(\tau)=\sum_{j=0}^nb_j\tau^j=\xi$, so 
$$H\sim \sum_{j=0}^na_j\tau^j-\xi$$ (this relation is irreducible since it is linear and so it is associate with $H$ by uniqueness). 
We also see that $\tau$ is the only root of $F(t,g)$ contained in $\k$ (again by the uniqueness of the algebraic relation satisfied by $b_0,\ldots,b_n$).
For simplicity we assume that $\tau=\xi=0$, otherwise we may shift the variables
$t,x$ by a constant without affecting either the assumptions or the conclusion of the proposition. Then $H\sim a_0, b_0=0$ and $b_1,\ldots,b_n$ are free variables
over $\k$. We also have $g=b_1t+\ldots+b_nt^n$.

Let $m_P$ be the multiplicity of $(\tau,\xi)=(0,0)$ as a point on $C$. We claim that
the multiplicity of $0$ as a root of $F(t,g)\in K[t]$ is exactly $m_P$. 
Let $F=\sum_{l=m_P}^d\FF_l(t,x),\deg\FF_l=l$ be the decomposition of $F$ into homogeneous forms.
We have $$F(t,g)=\sum_{l=m_P}^d\FF_l(t,b_1t+\ldots+b_nt^n)=\FF_{m_P}(1,b_1)t^{m_P}+\mbox{terms of degree}>m_P.$$
Since $\FF_{m_P}\neq 0$ this proves our claim.

Now we want to show that any other root $\al\neq 0$ of $F(t,g)$ is simple. Let $\al\neq 0$ be such a root. We have observed that necessarily $\al\not\in\k$.
In particular $c(\al)\neq 0$ (since $\k$ is algebraically closed and $c(t)\in\k(t)$). Therefore $\phi_i=\ze_i/c$ are regular at $\al$ (i.e. $\pi_\al$ is regular
at $\phi_i$) and so are $\phi_i',\phi_i^{(2)}$.
We have $$F(\al,g(\al))=c(\al)\prod_{i=1}^d\lb g(\al)-\phi_i(\al)\rb=0,$$ so for some $i$ we must have $g(\al)=\phi_i(\al)$,
i.e. \beq\label{geq1}b_1\al+b_2\al^2+\ldots+b_n\al^n=\phi_i(\al)\eeq We will assume that $i=1$, so $g(\al)=\phi_1(\al)$. Now assume that $\al$ is a double root
of $F(t,g)$. Then $F(t,g)'(\al)=0$. We have
$$F(t,g)'=c(t)\sum_{i=1}^d(g'-\phi_i')\prod_{1\le j\le d\atop{j\neq i}}(g-\phi_j)+c'(t)\prod_{i=1}^d(g-\phi_i),$$
therefore either $g'(\al)=\phi_1'(\al)$, or $g(\al)=\phi_j(\al)$ for some $j\neq 1$. The latter cannot happen since then $\phi_j(\al)=\phi_1(\al)$ and so
$\al\in\k$ (because $(\ze_i-\ze_1)(\al)=0$, $\ze_i-\ze_1$ divides some nonzero polynomial in $\k[t]$ and $\k$ is algebraically closed), which is a contradiction.
So we have \beq\label{geq2}g'(\al)=b_1+2b_2\al+\ldots+nb_n\al^{n-1}=\phi_1'(\al).\eeq Multiplying \rf{geq2} by $\al$ and subtracting from \rf{geq1} we obtain
$$-b_2\al^2-2b_3\al^3-...-(n-1)b_n\al^n=\phi_1(\al)-\al\phi_1'(\al).$$
Since $\al\neq 0$ it follows that $b_2$ lies in the algebraic closure of $\k(b_3,\ldots,b_n,\al)$ and by \rf{geq2} so does $b_1$. This implies that
the transcendence degree of $\k(b_1,\ldots,b_n)$ over $\k(b_3,\ldots,b_n)$ is at most one, which is a contradiction since $b_1,\ldots,b_n$ are algebraically independent
over $\k$.

Now we handle the case when $F(t,g)$ has no roots in $\k$. First we show that $F(t,g)$ has no root of multiplicity 3 or higher. Assume to the contrary that 
$\al\in K$ is such a root. Then $F(t,g)(\al)=F(t,g)'(\al)=F(t,g)^{(2)}(\al)=0.$ As above this implies (using the product rule \rf{prodrule} and the fact that $\al\not\in\k$) that for some $i$ we have 
$$g(\al)=\phi_i(\al),g'(\al)=\phi_i'(\al),g^{(2)}(\al)=\phi_i^{(2)}(\al).$$
We assume that this happens for $i=1$. The relation
$$g^{(2)}(\al)=b_2+3b_3\al+\ldots+\lb\begin{array}{c}n\\2\end{array}\rb\al^{n-2}=\phi_1^{(2)}(\al)$$ implies that $b_2$ is algebraic over $\k(b_3,\ldots,b_n,\al)$.
The same then follows for $b_1,b_0$ from the relations $g'(\al)=\phi_1'(\al),g(\al)=\phi_1(\al)$. This implies that $\k(b_0,\ldots,b_n)$ has transcendence
degree 1 over $\k(b_3,\ldots,b_n,\al)$, so the transcendence degree of $\k(b_0,\ldots,b_n)$ over $\k$ is at most $n-1$. This is a contradiction since $b_0,\ldots,b_n$ satisfy only one algebraic relation over $\k$.

Finally we want to exclude the possibility of two double roots $\al,\be\not\in\k, \al\neq\be$ of $F(t,f)$. Assume to the contrary that such $\al,\be$ exist.
Arguing as in the previous cases we see that there must exist $i\neq j$ such that 
\beq\label{eq4}g(\al)=\phi_i(\al),g'(\al)=\phi_i'(\al),g(\be)=\phi_j(\be),g'(\be)=\phi_j'(\be).\eeq
We assume $i=1,j=2$. The relations \rf{eq4} imply that
$$b_0+b_1\al+b_2\al^2+b_3\al^3,b_0+b_1\be+b_2\be^2+b_3\be^3,$$
$$b_1+2b_2\al+3b_3\al^2,b_1+2b_2\be+3b_3\be^2$$ are all algebraic over $\k(b_4,\ldots,b_n,\al,\be)$ (recall that $n\ge 3$).
This gives an inhomogeneous linear system of equations for $b_0,b_1,b_2,b_3$ over the algebraic closure of $\k(b_4,\ldots,b_n,\al,\be)$ with determinant
$$\det\lbb\begin{array}{cccc} 1 & \al & \al^2 & \al^3 \\ 1 & \be & \be^2 & \be^3 \\ & 1 & 2\al & 3\al^2 \\ & 1 & 2\be & 3\be^2\end{array}\rbb=-(\al-\be)^4\neq 0.$$
This implies that $b_0,b_1,b_2,b_3$ are algebraic over $\k(b_4,\ldots,b_n,\al,\be)$, which is a contradiction since $b_0,\ldots,b_n$ satisfy only one algebraic relation
over $\k$. This concludes the proof of Proposition \ref{keyprop}.

{\bf Acknowledgments.} The author would like to thank Ze\'{e}v Rudnick for many useful discussions and for his encouragement during the research leading to this paper. The present work is part of the author's Ph.D. studies at Tel-Aviv University under his supervision.
The author would also like to thank Umberto Zannier for suggesting the main idea behind the proof of Proposition \ref{keyprop} and Dan Carmon for suggesting the way to treat the even characteristic case.
The author would also like to thank Lior Bary-Soroker for some useful discussions, Brian Conrad for some useful remarks and suggestions and Keith Conrad for his careful reading of the paper and many helpful remarks about the exposition. The author would like to thank the MathOverflow community and especially Peter M\"{u}ller for pointing out some of the results on multiply transitive groups used in the present work. Finally the author would like to thank the anonymous referee of this paper for providing multiple expositional corrections and suggestions.

\end{document}